\newtheorem{theorem}{Theorem}[section]
\newtheorem{lemma}[theorem]{Lemma}
\newtheorem{corollary}[theorem]{Corollary}
\theoremstyle{definition}
\newtheorem{remark}[theorem]{Remark}
\newtheorem{example}[theorem]{Example}
\numberwithin{equation}{section}
\newcounter{minutes}\setcounter{minutes}{\time}
\newcounter{hours}\setcounter{hours}{\time}
\begin{document}
\vspace*{-2cm}
\title[Meromorphic functions with small Schwarzian derivative]
%{Properties of analytic and meromorphic functions depending on Schwarzianzian derivative}
{Meromorphic functions with small Schwarzian derivative}
%\date{\today}
% Time Stamp%%%%%%%%%%%%%%%%%
\def\thefootnote{}
\footnotetext{ \texttt{\tiny File:~\jobname .tex,
          printed: \number\day-\number\month-\number\year,
          \thehours.\ifnum\theminutes<10{0}\fi\theminutes}
} \makeatletter\def\thefootnote{\@arabic\c@footnote}\makeatother
%%%%%%%%%%%%%%%%%%%%%%%%%%%%%

\author[V. Arora]{Vibhuti Arora}
\address{Vibhuti Arora, Discipline of Mathematics,
Indian Institute of Technology Indore,
Simrol, Khandwa Road, Indore 453 552, India
}
\email{vibhutiarora1991@gmail.com}

\author[S. K. Sahoo]{Swadesh Kumar Sahoo${}^*$}
\address{Swadesh Kumar Sahoo, Discipline of Mathematics,
Indian Institute of Technology Indore,
Simrol, Khandwa Road, Indore 453 552, India}
\email{swadesh@iiti.ac.in}

\thanks{${}^*$ The corresponding author}

\begin{abstract}
We consider the family of all meromorphic functions $f$ of the form 
$$
f(z)=\frac{1}{z}+b_0+b_1z+b_2z^2+\cdots
$$ 
analytic and locally univalent in the puncture disk $\mathbb{D}_0:=\{z\in\mathbb{C}:\,0<|z|<1\}$. 
Our first objective in this paper is to find a sufficient condition  
for $f$ to be meromorphically convex of order $\alpha$, $0\le \alpha<1$, 
in terms of the fact that the absolute value of the well-known Schwarzian derivative 
$S_f (z)$ of $f$ is bounded above by a smallest positive root of a non-linear equation. 
Secondly, we consider a family of functions $g$ of the form 
$g(z)=z+a_2z^2+a_3z^3+\cdots$ analytic and locally univalent in the open unit disk 
$\mathbb{D}:=\{z\in\mathbb{C}:\,|z|<1\}$, and 
show that $g$ is belonging to a family of functions convex in one direction if
$|S_g(z)|$ is bounded above by a small positive constant depending on the second coefficient $a_2$. 
In particular, we show that such functions $g$ are also contained in the starlike and close-to-convex family.
\\

\smallskip
\noindent
{\bf 2010 Mathematics Subject Classification}. 30D30, 30C45, 30C55, 34M05.

\smallskip
\noindent
{\bf Key words and phrases.}
Meromorphic functions, Convex functions, Meromorphically convex functions, 
Close-to-convex functions, Starlike functions, Schwarzian derivative.
\end{abstract}

\maketitle
\thispagestyle{empty}

%%%%%%%%%%%%%%%%%%%%%%%%%%%%%%%%%%%%%%
%%%%%%%%%%% Section 1 %%%%%%%%%%%%%%%
%%%%%%%%%%%%%%%%%%%%%%%%%%%%%%%%%%%%

\section{Introduction}\label{sec1}
Recall that a function $f$ which is analytic in a region, except possibly at poles, is said to be meromorphic in that region.
Hence, analytic functions are by default meromorphic without poles.
In this paper, we consider the family of all meromorphic functions $f$ of the form 
$$
f(z)=\frac{1}{z}+b_0+b_1z+b_2z^2+\cdots
$$
defined in the open unit disk $\mathbb{D}:=\{z\in\mathbb{C}:\,|z|<1\}$. Clearly, $f$ has a simple pole 
at the origin, and hence it is analytic in the puncture disk $\mathbb{D}_0:=\{z\in\mathbb{C}:\,0<|z|<1\}$. 
Let us denote this family of meromorphic functions by $\mathcal{B}$. The
set of all univalent functions in $\mathcal{B}$ is usually denoted by $\Sigma$.
We also consider the family $\mathcal{A}$, of functions $g$ analytic in $\mathbb{D}$ of the form
$$
g(z)=z+a_2z^2+a_3z^3+\cdots.
$$
A quick observation which can easily be verified that 
\begin{equation}\label{fg-eqn}
f\in \mathcal{B} \iff g=1/f\in \mathcal{A}.
\end{equation}

A single valued function $f$ is said to be {\em univalent} 
(or schlicht) in a domain $D\subset \mathbb{C}$ if it never takes the same value twice: $f(z_1)\neq f(z_2)$ for all 
$z_1 \neq z_2$ in $D$. The family of all univalent functions $g\in \mathcal{A}$ is denoted by $\mathcal{S}$. 
Such functions $g$ are of interest because they appear in the Riemann mapping theorem.
The study of the family $\mathcal{S}$ became popular when the Bieberbach conjecture was
first posed in 1916 and remained as a challenge to all mathematicians until 1985 when it was solved by
de Branges. Since then the conjecture is known as the de Branges Theorem. 
This problem has been attracted to many mathematicians in introducing certain subclasses of $\mathcal{S}$
and developing important new methods in geometric function theory.
The de Branges theorem gives a necessary condition for a function $g$ to be in $\mathcal{S}$ in terms of
its Taylor's coefficient. On the other hand, several important sufficient conditions for functions to be in $\mathcal{S}$ 
were also introduced by several researchers to generate its subclasses having interesting geometric properties.
Part of this development is the family of convex functions, starlike functions, close-to-convex functions, etc.
Later, counterpart of this development for the family $\Sigma$ of 
meromorphic univalent functions were also studied extensively.   
We refer to the standard books by Duren \cite{Dur83}, Goodman \cite{Goo83}, Lehto \cite{Leh87},
and Pommerenke \cite{Pom75} for the literature on the topic. Therefore, the study of 
sufficient conditions for functions to be in $\mathcal{S}$, in particular, in its subfamilies are important
in this context. In this paper, we mainly deal with such properties in terms of the well-known Schwarzian 
derivative of locally univalent functions. 

First let us recall the definition of the Schwarzian derivative. 
Let $f$ be a {\em meromorphic function} and $f'(z) \neq 0$ in $\mathbb{D}:=\{z\in \mathbb{C}: |z|<1\}$ (in other words, 
we say, $f$ is locally univalent in $\mathbb{D}$), then the Schwarzian derivative of $f$ at $z$ is defined as
$$
S_f (z)=\left(\cfrac{f''}{f'}\right)'-\cfrac{1}{2} \left(\cfrac{f''}{f'}\right)^2.
$$
It is appropriate here to recall from texts that $S_f=0$ if and only if $f$ is a M\"obius
transformation (see for instance, \cite[p~51]{Leh87}).
A simple computation through \eqref{fg-eqn} yields the useful relation
$$
S_f(z)=S_g(z)
$$ 
for all locally univalent meromorphic functions $f\in\mathcal{B}$ and $g=1/f\in\mathcal{A}$. 

The study of necessary and sufficient conditions for functions to be univalent, in particular to be 
starlike, convex, close-to-convex, in terms of Schwarzian derivatives are attracted by a number of mathematicians. 
It is a surprising fact is that most of such necessary conditions are proved using standard theorems in complex
variables, whereas sufficient conditions are proved through initial value problems 
of differential equations; see for instance \cite{Dur83,Leh87}. The conditions of the form
\begin{equation}\label{eqn:1}
|S_f(z)| \leq \cfrac{C_0}{(1-|z|^2)^2},
\end{equation}
for a positive constant $C_0$, have been most popular to many mathematicians. For instance, Nehari in 1949 first proved that 
if $g$ is an analytic and locally univalent function in $\mathbb{D}$ satisfying
\eqref{eqn:1} with $C_0=2$
then $g$ is univalent in $\mathbb{D}$. This condition becomes necessary 
when the constant $C_0=6$; see \cite{Neh49}. 
Hille \cite{Hil49} showed that the constant $2$ in the sufficient condition of Nehari is the 
best possible constant. 
Related problems are also investigated in \cite{Neh54,Pok51,Neh79}.
% Necessary conditions in the form \eqref{eqn:1} for functions to be starlike and convex are respectively
% obtained in \cite{} and \cite{Neh76}. 
Thus, applications of the Schwarzian derivative can be seen in second order linear differential equations, univalent functions,
and also in Teichm{\"u}ller spaces \cite{Dur83,Pom75}. Note that if $g\in\mathcal{A}$ is univalent 
then \eqref{fg-eqn} leads to the useful coefficient relation $|a_2^2-a_3|=|S_f(0)|/6$; see \cite[p.~263]{Dur83}.
%For univalency there are some necessary and sufficient condition depending on schwarzian derivative. 
% have been proved by several author, Nehari \cite{Neh49, Neh54 }. 

Another form of sufficient condition for univalency in terms of Schwarzian derivative attracted by
many researchers in this field is 
\begin{equation}\label{eqn:2}
|S_g(z)| \leq 2C_1,
\end{equation}
for some positive constant $C_1$.
If $g\in \mathcal{A}$ satisfies \eqref{eqn:2} with $C_1=\pi^2/4$, then it is proved by Nehari \cite{Neh49}
that $g$ is univalent in $\mathbb{D}$.
Gabriel \cite{Gab55} studied a sufficient condition for a function $g\in \mathcal{A}$ to be starlike 
in the form \eqref{eqn:2} for some optimal constant $C_1$.
Sufficient condition in the form \eqref{eqn:2} for convexity of order $\alpha$ 
is investigated by Chiang in \cite{Chi94}. However, the best possible constant is not yet known in this case. 
Kim and Sugawa in \cite{KS} obtained the 
sufficient condition in the form \eqref{eqn:2} for starlikeness of order $\alpha$ 
by fixing the second coefficient of the function.
 
Our main objective in this paper is to study the sufficient conditions of the form \eqref{eqn:2} 
for meromorphically convex functions of order $\alpha$ and 
for functions in a family that are convex in one direction, in particular in the starlike and close-to-convex family.
% The problem of finding bounds on the Schwarzian derivatives that would imply some geomertric properties like univalence,
% starlikeness, convexity or strongly convexity of functions were investigated by a number of authors. For example:
% 
% \begin{itemize}
% \item Necessary and sufficient conditions for univalency are given by Nehari in \cite{Neh49,Neh54,Neh79}.
% %\item Nehari in \cite{Neh49, Neh54, Neh79 } gives necessary and sufficient condition for univalency.
% \item In \cite{Gab55} Gabriel proved the univalency for meromorphic functions and starlikeness for analytic functions.
% \item Chiang proved the convexity and strongly starlikeness of some order for analytic functions in \cite{Chi94}.
% \item Kim and Sugawa \cite{KS}  proved the starlikeness and convexity for normalized analytic functions with fixed second coefficient.
% \end{itemize}
% 
% In all these cases, the authors assume some bound on the Schwarzian derivative. In this paper, we continue such 
% investigation for analytic and meromorphic functions when their Schwarzian derivatives are bounded.
Rest of the structure of this paper is as follows. Section \ref{sec2} is devoted to the definitions of the classes of functions
and statements of our main results. Section \ref{sec3} deals with some preliminary results those are used to prove our
main results. Finally, the proof of our main results are given in Section \ref{sec4}
followed by examples of functions satisfying these results.

%%%%%%%%%%%%%%%%%%%%%%%%%%%%%%%%%%%%%%%%%%%%
%%%%%%%%%%%%%%%%%%%%%% section 2 %%%%%%%%%%%%%%%%%%%%%%
%%%%%%%%%%%%%%%%%%%%%%%%%%%%%%%%%%%%%%%%%%%%%%%%%%%%

\section{Definitions and main results}\label{sec2}
 
This section is divided into two subsections. The first subsection concerns about the definition of a 
subclass of the class $\mathcal{B}$, 
namely, the meromorphically convex functions of order $\alpha$ having simple pole at 
$z=0$, and the main results associated with these functions. The second subsection 
deals with some well-known analytic functions convex in one direction, 
in particular, functions in the starlike and close-to-convex 
families. Sufficient conditions 
in the form \eqref{eqn:2}
for functions to be in these families are also stated.

\subsection{Meromorphic functions in $\mathbb{D}$ with a simple pole at $z=0$}\label{ssec1}
If $f \in \mathcal{B}$ satisfies $f(z) \neq 0$ in $\mathbb{D}_0$ and 
$$
-{\rm Re}\Big(\frac{zf'(z)}{f(z)}\Big)>\alpha \quad(z \in \mathbb{D},\, 0 \leq \alpha<1),
$$
then $f$ is said to be {\em meromorphically starlike of order $\alpha$}. 
A function $f\in\mathcal{B}$ is said to be {\em meromorphically starlike (of order $0$)}
if and only if complement of $f(\mathbb{D}_0)$ is starlike 
with respect to the origin (see \cite[p.~265, Vol. 2]{Goo83}). Note that
meromorphically starlike functions are univalent and hence they lie on the class
$\Sigma$.
Similarly, if $f \in \mathcal{B}$ satisfies $f(z) \neq 0$ in $\mathbb{D}_0$ and 
\begin{equation}\label{e:3}
-{\rm Re}\Big(1+\frac{zf''(z)}{f'(z)}\Big)>\alpha \quad(z \in \mathbb{D},\, 0 \leq \alpha<1),
\end{equation}
then $f$ is said to be {\em meromorphically convex of order $\alpha$}. 
If $\alpha=0$, the inequality (\ref{e:3}) is equivalent to the definition of meromorphically convex functions. That is,
$f$  maps $\mathbb{D}$ onto the complement of a convex region \cite{Gab55,Nun00}. 
In this case, we say $f$ is {\em meromorphically convex}. Note that 
meromorphically convex functions are also univalent and hence they lie in the class
$\Sigma$.
For more geometric properties of these classes, we refer to the standard books 
\cite{Goo83, Mimo00}.

% One of the sufficient conditions for univalency is proved by Nehari \cite{Neh49} in $1949$ is that if $g$ is an analytic, locally univalent function in the unit disk and satisfies
% $$
% |S_g(z)| \leq \cfrac{2}{(1-|z|^2)^2},
% $$ 
% then $g$ is univalent in $\mathbb{D}$. Hille in \cite{Hil49} showed that the constant $2$ in Nehari's univalence criterion is the best possible. Nehari in \cite{Neh49} also proved that if we replace constant $2$ by $6$, then this sufficient condition becomes necessary for the univalence of $g$ in the unit disk.
% 
% There are two other sufficient conditions for univalency depending on Schwarzian derivative. They are listed below. 
% 
% \begin{itemize}
% \item If $g$ is an analytic, locally univalent function in the unit disk and satisfies
% $$
% |S_g(z)| \leq \cfrac{4}{1-|z|^2}, 
% $$
% then $g$ is univalent in $\mathbb{D}$. This was announced by Pokornyi \cite{Pok51} in $1951$ without proof. Later, in $1954$, Nehari proved this result in \cite{Neh54} and also showed that the constant $4$ is sharp.
% 
% \item If $g$ is an analytic function and satisfies
% \begin{equation}\label{eq:28}
% |S_g(z)| \leq 2c \,\mbox{(constant)}\,,
% \end{equation}
% then $g$ is univalent in $\mathbb{D}$ and constant $c$ is sharp if $c=\pi^2/4$ proved by Nehari \cite{Neh49}. 
% \end{itemize}
Main results of this paper deal with functions whose Schwarzian derivatives are 
bounded above by some constant, that is, functions satisfy (\ref{eqn:2}).
Note that if $S_f(z)$ is uniformly bounded in $\mathbb{C}$, then the Schwarzian derivative is still well defined.
Hence the assumption that $f$ is locally univalent at a point $z$ (or $g'(z) \neq 0$), 
in (\ref{eqn:2}) is not chosen; see also Tichmarsh \cite[p.~198]{Tic39}.

% One can easily verify that if $f \in B$ then $g=1/f \in \mathcal{A}$ and $S_f (z)=S_g (z)$, which imply if 
% \begin{equation}\label{eq:0}
% f(z)=1/z+a_1 z+a_2 z^2+\cdots \quad \mbox{ for } 0<|z|<1,
% \end{equation}
% and 
% $$
% |S_f (z)| \leq \cfrac{\pi^2}{2} \quad \mbox{ for } |z|<1,
% $$
% then $f(z)$ is univalent in the unit disk \cite{Neh49}. 
Gabriel modified Nehari's technique to show univalency and convexity property of
functions $f\in\mathcal{B}$ and proved the following:

\medskip
\noindent{\bf Theorem A.} \cite[Theorem 1]{Gab55}
{\em If $f\in\mathcal{B}$ satisfies 
\begin{equation}\label{ThmAeq1}
|S_f (z)| \leq 2 c_0 \quad \mbox{ for }|z|<1,
\end{equation}
where $c_0$ is the smallest positive root of the equation 
$$
2\sqrt{x}-\tan \sqrt{x}=0,
$$
then $f$ is univalent in the punctured disk and maps the interior of 
each circle $|z|=r<1$ onto the exterior of a convex region. The constant $c_0$ is the largest possible constant
satisfying \eqref{ThmAeq1}.}

An analog to this result for meromorphically convex functions of order 
$\alpha$ is one of our main results which is stated below.

\begin{theorem}\label{main-thm1}
Let $0 \leq \alpha <1$.
If $f\in\mathcal{B}$ satisfies 
\begin{equation}\label{eq:1}
|S_f (z)| \leq 2 c_\alpha \quad \mbox{ for $|z|<1,$}
\end{equation}
where $c_\alpha$ is the smallest positive root of the equation
\begin{equation}\label{eq:2}
2\sqrt{x}-(1+\alpha) \tan \sqrt{x}=0
\end{equation}
% then $f(z)$ is univalent in $0<|z|<1$ and meromorphically convex of order $\alpha \,(0 \leq \alpha <1 )$ and 
% also $\alpha$ decrease as $c_\alpha$
% increase. The constant $c_\alpha$ is the largest possible one.
depending on $\alpha$, then 
\renewcommand{\theenumi}{\alph{enumi}}
\begin{enumerate}
%\item $c_\alpha$ increases if and only if $\alpha$ decreases,
%\item $f$ is univalent for $0<|z|<1,$
\item $f$ is meromorphically convex of order $\alpha$; and
\item the quantity $c_\alpha$ is the largest possible constant satisfying \eqref{eq:1}.
\end{enumerate}
\end{theorem}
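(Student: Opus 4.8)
The plan is to convert the geometric convexity condition into a statement about a linear second-order ODE and then to prove a sharp comparison estimate for that equation. First I would attach to $f$ the canonical solution $y(z)=(f'(z))^{-1/2}$ of the associated equation $y''+\tfrac12 S_f\,y=0$; a direct computation gives $y'/y=-\tfrac12(f''/f')$, so that
\[
1+\frac{zf''(z)}{f'(z)}=1-2\,\frac{z\,y'(z)}{y(z)}.
\]
Since $f(z)=1/z+\cdots$ forces $f'\sim -z^{-2}$, the function $y$ vanishes simply at the origin and is zero-free on $\mathbb{D}_0$ by local univalence, while $q:=\tfrac12 S_f$ is analytic on $\mathbb{D}$ with $|q|\le c_\alpha$. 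In these terms the defining inequality \eqref{e:3} for meromorphic convexity of order $\alpha$ becomes exactly $\mathrm{Re}\,(z\,y'(z)/y(z))>(1+\alpha)/2$ for all $z\in\mathbb{D}$.

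Next I would fix a direction and reduce to a real-variable problem. For $z=re^{i\theta}$ set $W(t)=y(te^{i\theta})$; then $W''+Q(t)W=0$ on $[0,r]$ with $Q(t)=e^{2i\theta}q(te^{i\theta})$, so $|Q(t)|\le c_\alpha$, $W(0)=0$, and $z\,y'/y=t\,W'/W$. Writing $\phi=W'/W$ gives the Riccati relation $\phi'=-Q-\phi^2$, and with $\rho=\mathrm{Re}\,\phi$, $\sigma=\mathrm{Im}\,\phi$ one obtains $\rho'=-\mathrm{Re}\,Q-\rho^2+\sigma^2\ge -c_\alpha-\rho^2$. I would compare this with the real extremal $\rho_0(t)=\sqrt{c_\alpha}\cot(\sqrt{c_\alpha}\,t)$, the logarithmic derivative of the solution $\sin(\sqrt{c_\alpha}\,t)/\sqrt{c_\alpha}$ of $v''+c_\alpha v=0$, which satisfies $\rho_0'=-c_\alpha-\rho_0^2$ with the identical behaviour $\rho,\rho_0\sim 1/t$ near the origin.

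The heart of the proof, and the step I expect to be the main obstacle, is establishing $\rho\ge\rho_0$ on $(0,r]$ from these two relations. Subtracting gives $(\rho-\rho_0)'\ge -(\rho+\rho_0)(\rho-\rho_0)$; introducing the integrating factor $\mu(t)=\exp\!\int(\rho+\rho_0)$ makes $\mu\,(\rho-\rho_0)$ nondecreasing, and a careful expansion at $t=0$, using that $W''(0)=-Q(0)W(0)=0$ forces $W(t)=c_1t+c_3t^3+\cdots$, shows $\mu\,(\rho-\rho_0)\to 0$ there, whence $\rho\ge\rho_0$. The delicate point is precisely this initial comparison, since $\rho$ and $\rho_0$ agree to leading order $1/t$ and one must control the next term; dropping the favourable $\sigma^2$ term and using $\mathrm{Re}\,Q\le|Q|\le c_\alpha$ keeps the estimate one-sided. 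This yields $\mathrm{Re}(z\,y'/y)=t\rho\ge \sqrt{c_\alpha}\,t\cot(\sqrt{c_\alpha}\,t)$.

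Finally I would invoke the monotonicity of $x\mapsto \sqrt{x}\cot\sqrt{x}$, which decreases strictly from $1$ to $0$ on $(0,\pi^2/4)$; this simultaneously guarantees the unique root $c_\alpha\in(0,\pi^2/4)$ of \eqref{eq:2} (so $\sqrt{c_\alpha}<\pi/2$ and the cotangent stays positive) and shows $\sqrt{c_\alpha}\,t\cot(\sqrt{c_\alpha}\,t)$ is decreasing in $t$. Hence for $t=r<1$ its value strictly exceeds $\sqrt{c_\alpha}\cot\sqrt{c_\alpha}=(1+\alpha)/2$, giving the strict inequality and part (a). For the sharpness in part (b) I would test $f(z)=\sqrt{c_\alpha}\cot(\sqrt{c_\alpha}\,z)\in\mathcal{B}$, for which $S_f\equiv 2c_\alpha$; along the real axis $-\mathrm{Re}(1+zf''/f')=-1+2\sqrt{c_\alpha}\,r\cot(\sqrt{c_\alpha}\,r)\to\alpha$ as $r\to 1^-$, so by the same monotonicity any constant exceeding $c_\alpha$ would push this boundary limit below $\alpha$ and destroy convexity of order $\alpha$, proving $c_\alpha$ largest possible.
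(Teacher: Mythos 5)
Your argument is correct, and its skeleton coincides with the paper's: both reduce part (a) to proving $\mathrm{Re}\,(zw'(z)/w(z))>(1+\alpha)/2$ for the solution $w$ of $w''+\tfrac12 S_f\,w=0$ vanishing at the origin (your $y=(f')^{-1/2}$ is a constant multiple of the paper's $w_2$, so your identity $1+zf''/f'=1-2zy'/y$ is exactly the content of \eqref{eq:4} and Lemma~\ref{l:1}), and both use the test function $\sqrt{c}\cot(\sqrt{c}z)$ for the sharpness in part (b). Where you genuinely diverge is in the core estimate $\mathrm{Re}\,(zw'/w)\ge\sqrt{c_\alpha}\,r\cot(\sqrt{c_\alpha}\,r)$: the paper obtains it from Gabriel's integral identity (Lemma~\ref{l:3}) combined with his variational inequality (Lemma~\ref{l:4}) applied separately to the real and imaginary parts of $w_2$ along a ray, whereas you derive it from the Riccati equation $\phi'=-Q-\phi^2$ by a differential comparison with $\rho_0=\sqrt{c_\alpha}\cot(\sqrt{c_\alpha}\,t)$. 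Your handling of the delicate initial condition is sound: since $W''(0)=0$ one has $W(t)=c_1t+c_3t^3+\cdots$, so $\rho-\rho_0=O(t)$ while the integrating factor is of order $t^2$, and the comparison also rules out zeros of $W$ on $(0,1)$ because $\rho_0$ stays bounded there ($\sqrt{c_\alpha}<\pi/2$). This route is self-contained (no appeal to Gabriel's lemmas) and makes transparent which terms ($\sigma^2\ge0$ and $\mathrm{Re}\,Q\le|Q|$) are being discarded; the paper's integral method buys a ready-made equality case in Lemma~\ref{l:4}. You also replace Lemma~\ref{l:2}, which proves $\mathrm{Re}\,(z\sqrt{c}\cot(z\sqrt{c}))>(1+\alpha)/2$ for all complex $z\in\mathbb{D}$, by the strict monotonicity of $u\cot u$ on $(0,\pi/2)$ evaluated at the real point $u=\sqrt{c_\alpha}\,r$ together with $\sqrt{c_\alpha}\cot\sqrt{c_\alpha}=(1+\alpha)/2$ from \eqref{eq:2}; since the bound is only ever needed on the real ray, this is a genuine simplification. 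In (b) your boundary-limit computation as $r\to1^-$ plays the role of the paper's explicit choice of $x_0\in(\sqrt{c_\alpha/c},1)$; both are valid.
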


In particular, if $\alpha =0$, Theorem \ref{main-thm1} reduces to Theorem A.  
%%%%%%%%%%%%%%%%%%%%%%%%%%%%%%%%%%%%%%%%%%%%%%%%%%%
%%%%%%%%%%%%%%%%%%%%%%%%%%%%%%%%%%%%%%%%%%%%
\subsection{Analytic functions in $\mathbb{D}$}\label{ssec2}
A function $g\in\mathcal{A}$ is said to be {\em convex of order $\beta$}, $0\le \beta<1$, if and only if 
$$
{\rm Re}\,\Big(1+\frac{zg''(z)}{g'(z)}\Big)>\beta,\quad z\in\mathbb{D}.
$$
%For the choice $\beta=0$, we simply call such functions $g$ as {\em convex functions}.
Chiang proved the following sufficient condition for convex functions of order $\beta$ in terms of small 
Schwarzian derivative:

\medskip
\noindent
%if the Schwarzian derivative $S_g (z)$ of $g$ and $a_2$, the second coefficient of $g$, are small then $g$ is convex of some order. 
{\bf Theorem B.} \cite[Theorem 2]{Chi94}
{\em Let $g \in \mathcal{A}$ and $|a_2|=\eta<1/3$. Suppose that
$$
\sup _{z \in \mathbb{D}}|S_g(z)|=2\delta,
$$
where $\delta=\delta(\eta)$ satisfies the inequality
$$
6\eta+5\delta(1+\eta)e^{\delta/2}<2.
$$
Then $g$ is convex of order
$$
\cfrac{2-6\eta-5(1+\eta)\delta e^{\delta/2}}{2-2\eta-(1+\eta)\delta e^{\delta/2}}\,.
$$
}

\medskip
Our aim in this subsection is to state results similar to Theorem B for certain functions convex
in one direction, in particular, for functions in the family of starlike and close-to-convex functions. 
% First we recall what is starlike function
% followed by a subclass of the class of starlike functions and then state the corresponding theorem.

For $\beta \geq 3/2$, we consider the class $\mathcal{C}_{\beta}$ introduced by Shah in \cite{SHAH73} as follows:
$$
\mathcal{C}_{\beta}=\left\{g\in \mathcal{A}:\,\frac{-\beta}{2\beta-3}
<{\rm Re}\Big(1+ \frac{zg''(z)}{g'(z)}\Big)<\beta, ~z \in \mathbb{D} \right\}.
$$
This originally follows from a sufficient condition for a function $g$ to be convex in one direction
studied by Umeraza in \cite{UME52}. 
Note that the special cases $\mathcal{C}_{3/2}$ and $\mathcal{C}_\infty$ are contained in the family of starlike and close-to-convex functions respectively 
(see the detailed discussion below in this section). It is a natural question to ask for
functions belonging to the family $\mathcal{C}_\beta$ for all $\beta\ge 3/2$. Such functions
can be generated in view of \cite[Theorem~12]{SHAH73}, which says that for all functions 
$f\in\mathcal{A}$ satisfying 
$$
\frac{\beta}{3-2\beta}<{\rm Re}\Big(\frac{zf'(z)}{f(z)}\Big)<\beta,
$$
the Alexander transform of $f$ belongs to the family $\mathcal{C}_\beta$, $\beta\ge 3/2$.

We now state our second main result which provides a sufficient condition for functions to be in 
$\mathcal{C}_\beta$ with respect to its small Schwarzian derivative.

\begin{theorem}\label{main-thm2}
For $\beta \geq 3/2,$ set 
$$
\phi(\beta)=\min \left\{\cfrac{\beta-1}{\beta+1}\,,\cfrac{6(\beta-1)}{2(7\beta-9)}\right\}
~~\mbox{ and }~~
\psi(\beta)=\max \left\{\cfrac{\beta+3}{\beta+1}\,,\cfrac{11\beta-15}{7\beta-9}\right\}.
$$
Let $g \in \mathcal{A}$ and $|a_2|=\eta<\phi(\beta)$. Suppose that
$$
\sup _{z \in \mathbb{D}}|S_g(z)|=2\delta,
$$
where $\delta=\delta(\eta)$ satisfies the inequality
\begin{equation}\label{e:4}
2\eta+ \psi(\beta)\delta(1+\eta)e^{\delta/2}<2\phi(\beta).
\end{equation}
Then $g \in \mathcal{C}_{\beta}$. In particular, $g$ is convex in one direction.
\end{theorem}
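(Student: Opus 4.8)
Following the classical Nehari--Gabriel--Chiang device that already underlies Theorems A and B, my plan is to linearize the Schwarzian via the substitution $u=(g')^{-1/2}$, chosen with the branch $u(0)=1$. A direct logarithmic differentiation gives $u'/u=-\tfrac12\,g''/g'$ and hence $2u''+S_g\,u=0$, with initial data $u(0)=1$ and $u'(0)=-a_2$ coming from $g'(0)=1$, $g''(0)=2a_2$; note $|u'(0)|=\eta$. The whole point is the identity
$$
1+\frac{zg''(z)}{g'(z)}=1-\frac{2zu'(z)}{u(z)},
$$
so that membership in $\mathcal{C}_\beta$ is equivalent to confining $zu'(z)/u(z)$ to a sufficiently small disc and then reading off the real part against the two walls $\mathrm{Re}=\beta$ and $\mathrm{Re}=-\beta/(2\beta-3)$.

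The analytic core is a majorant estimate. Writing the problem in integral form along the segment $[0,z]$,
$$
u(z)=1-a_2z-\tfrac12\!\int_0^z(z-t)S_g(t)u(t)\,dt,\qquad u'(z)=-a_2-\tfrac12\!\int_0^z S_g(t)u(t)\,dt,
$$
I would compare $u$ with the constant-coefficient majorant solving $2M''=2\delta M$, $M(0)=1$, $M'(0)=\eta$, namely $M(r)=\cosh(\sqrt\delta\,r)+(\eta/\sqrt\delta)\sinh(\sqrt\delta\,r)$. Using $|S_g|\le 2\delta$ and the elementary bounds $\cosh\sqrt\delta\le e^{\delta/2}$ and $\sinh\sqrt\delta/\sqrt\delta\le e^{\delta/2}$ (each from $(2n)!\ge 2^n n!$), this yields the clean majorant $|u(z)|\le(1+\eta)e^{\delta/2}$ on $\mathbb{D}$. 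Feeding this back into the two integral identities and setting $K:=\delta(1+\eta)e^{\delta/2}$, I expect $|u(z)-1|<\eta+K/2$ and $|zu'(z)|<\eta+K$ for $|z|<1$, after a routine check that $\psi(\beta)\ge1$ and the hypotheses force $1-\eta-K/2>0$ so that these bounds are non-vacuous.

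Combining the two estimates gives $|u|\ge 1-\eta-K/2>0$ and therefore
$$
\left|\,1+\frac{zg''(z)}{g'(z)}-1\,\right|=\frac{2|zu'(z)|}{|u(z)|}\le\frac{2(\eta+K)}{1-\eta-K/2},
$$
so $1+zg''/g'$ lies in the disc centred at $1$ of radius $2(\eta+K)/(1-\eta-K/2)$, whence its real part lies in the symmetric interval $1\mp 2(\eta+K)/(1-\eta-K/2)$. Clearing denominators, the right-wall requirement $\mathrm{Re}(1+zg''/g')<\beta$ rearranges exactly into $2\eta+\tfrac{\beta+3}{\beta+1}K<2\tfrac{\beta-1}{\beta+1}$, while the left-wall requirement $\mathrm{Re}(1+zg''/g')>-\beta/(2\beta-3)$ rearranges into $2\eta+\tfrac{11\beta-15}{7\beta-9}K<\tfrac{6(\beta-1)}{7\beta-9}$. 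These are precisely the two branches appearing in $\psi(\beta)$ and $\phi(\beta)$.

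Finally, since $\phi(\beta)$ is the \emph{minimum} of the two lower constants and $\psi(\beta)$ the \emph{maximum} of the two upper constants, the single standing hypothesis $\eta<\phi(\beta)$ together with \eqref{e:4} dominates both wall inequalities at once, because $2\eta+\psi_iK\le 2\eta+\psi(\beta)K<2\phi(\beta)\le 2\phi_i$ for each branch $i$. This places $1+zg''/g'$ strictly inside the defining strip of $\mathcal{C}_\beta$, so $g\in\mathcal{C}_\beta$, and convexity in one direction then follows from the results of Shah and Umezawa cited above. The step I expect to be the main obstacle is twofold: pinning down the sharp majorant $(1+\eta)e^{\delta/2}$ so that the same factor $K$ drives both walls, and carrying out the two-sided bookkeeping cleanly --- unlike Theorem B, which only needed the convexity (lower) bound, here the upper wall $\mathrm{Re}<\beta$ must also be controlled, and one must verify that the symmetric disc estimate indeed collapses each wall onto the normalized form $2\eta+\psi_iK<2\phi_i$.
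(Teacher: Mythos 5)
Your proposal is correct and follows essentially the same route as the paper: your $u=(g')^{-1/2}$ is exactly the combination $cu+v$ of the two normalized solutions used there, the key bound $\bigl|2zu'/u\bigr|\le 4(\eta+K)/(2-2\eta-K)$ with $K=\delta(1+\eta)e^{\delta/2}$ is precisely the Chiang estimate the paper invokes, and the two-wall bookkeeping reducing to $2\eta+\psi_iK<2\phi_i$ matches the paper's computation. The only cosmetic difference is that you re-derive the majorant $(1+\eta)e^{\delta/2}$ from the integral equation rather than citing \cite[(13)]{Chi94}.
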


A function $g\in\mathcal{A}$ is said to be {\em starlike of order $\beta$}, $0\le \beta<1$, if and only if 
$$
{\rm Re}\,\Big(\frac{zg'(z)}{g(z)}\Big)>\beta,\quad z\in\mathbb{D}.
$$
In particular, for $\beta=0$, we simply call such functions $g$ as {\em starlike functions}.
Recall the sufficient condition for starlike functions $g\in\mathcal{A}$ from \cite[(16)]{PR95} which tells us that
%\begin{equation}\label{e:1}
$$
{\rm Re}\Big(1+ \frac{zg''(z)}{g'(z)}\Big)<\frac{3}{2}\, \implies \Big|\frac{zg'(z)}{g(z)}-\frac{2}{3}\Big|<\frac{2}{3}.
$$
%\end{equation}
This generates the following subclass of the class of starlike functions:
$$
\mathcal{C}_{3/2}:=\left\{g\in \mathcal{A}:\,{\rm Re}\Big(1+ \frac{zg''(z)}{g'(z)}\Big)
<\frac{3}{2}\right\}.
$$
This particular class of functions is also studied in different contexts in \cite{PS08}.

The following corollary immediately follows from Theorem~\ref{main-thm2} for the class $\mathcal{C}_{3/2}$.

% In Theorem \ref{corollary1} we are generalizing the Theorem B to prove that $g$ is starlike if Schwarzian derivative of $g$ and the second coefficient $a_2$ satisfies some conditions.

%then Theorem \ref{corollary1} gives 
%${\rm Re}\Big(1+ \cfrac{zf''}{f'}\Big)<\cfrac{3}{2}\,$  or starlikeness of $f$ and 

%Theorem \ref{corollary1} says that if schwarzianzian derivative of $f$ is bounded by some constant then $f$ is starlike.

\begin{corollary}\label{corollary1}
Let $g \in \mathcal{A}$ and $|a_2|=\eta<1/5.$ Suppose 
$$
\sup _{z \in \mathbb{D}}|S_g(z)|=2\delta
$$
where $\delta=\delta(\eta)$ satisfies the inequality
\begin{equation}\label{eq:22}
10\eta+9\delta(1+\eta)e^{\delta/2}<2.
\end{equation}
Then $g\in \mathcal{C}_{3/2}$. In particular, $g$ is starlike.
% $$
% {\rm Re}\Big(1+ \cfrac{zg''}{g'}\Big)<\cfrac{3}{2}\,.
% $$
\end{corollary}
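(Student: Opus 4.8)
The plan is to obtain Corollary~\ref{corollary1} as the special case $\beta=3/2$ of Theorem~\ref{main-thm2}, since the class $\mathcal{C}_{3/2}$ defined just before the corollary is precisely the class $\mathcal{C}_\beta$ of Theorem~\ref{main-thm2} evaluated at $\beta=3/2$. Indeed, at this value the lower bound $-\beta/(2\beta-3)$ in the definition of $\mathcal{C}_\beta$ degenerates (the denominator vanishes) to $-\infty$, so the two-sided condition collapses to the single inequality ${\rm Re}\,(1+zg''(z)/g'(z))<3/2$, which matches the definition of $\mathcal{C}_{3/2}$ given in the text. Thus it suffices to substitute $\beta=3/2$ into the hypotheses of Theorem~\ref{main-thm2} and verify that they reduce exactly to the stated hypotheses of the corollary.

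First I would evaluate the two auxiliary quantities. For $\phi$, the two competing terms at $\beta=3/2$ are $(\beta-1)/(\beta+1)=1/5$ and $6(\beta-1)/(2(7\beta-9))=1$, so $\phi(3/2)=\min\{1/5,\,1\}=1/5$. Likewise, for $\psi$ the two terms are $(\beta+3)/(\beta+1)=9/5$ and $(11\beta-15)/(7\beta-9)=1$, giving $\psi(3/2)=\max\{9/5,\,1\}=9/5$. Hence the condition $|a_2|=\eta<\phi(\beta)$ becomes $\eta<1/5$, exactly the hypothesis of the corollary.

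It then remains to rewrite the key inequality \eqref{e:4}. With $\phi(3/2)=1/5$ and $\psi(3/2)=9/5$, inequality \eqref{e:4} reads $2\eta+\tfrac{9}{5}\delta(1+\eta)e^{\delta/2}<2/5$; multiplying through by $5$ yields $10\eta+9\delta(1+\eta)e^{\delta/2}<2$, which is precisely \eqref{eq:22}. Therefore Theorem~\ref{main-thm2} applies and gives $g\in\mathcal{C}_{3/2}$. Finally, the displayed implication from \cite{PR95} quoted above, namely ${\rm Re}\,(1+zg''(z)/g'(z))<3/2 \Rightarrow |zg'(z)/g(z)-2/3|<2/3$, places $zg'(z)/g(z)$ in the open disk of radius $2/3$ centered at $2/3$, which lies in the right half-plane; hence ${\rm Re}\,(zg'(z)/g(z))>0$ and $g$ is starlike, yielding the concluding assertion. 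The only real work is the arithmetic verification that the min and max are attained at the first listed terms when $\beta=3/2$, so there is essentially no analytic obstacle beyond correctly tracking the constants.
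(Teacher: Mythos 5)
Your proposal is correct and is exactly the paper's route: the authors simply state that the corollary ``immediately follows from Theorem~\ref{main-thm2} for the class $\mathcal{C}_{3/2}$,'' and your arithmetic verification that $\phi(3/2)=1/5$, $\psi(3/2)=9/5$, and that \eqref{e:4} rescales to \eqref{eq:22} supplies precisely the omitted details. Your handling of the degenerate lower bound at $\beta=3/2$ and the deduction of starlikeness via the quoted implication from \cite{PR95} also match the paper's intent.
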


We next recall what is close-to-convex function
followed by a subclass of the class of close-to-convex functions and then state the corresponding result which is again
an easy consequence of Theorem~\ref{main-thm2}.

We here adopt the well-known Kaplan characterization for close-to-convex functions. Let $g\in\mathcal{A}$ be 
locally univalent. Then $g$ is {\em close-to-convex} if and only if 
$$
\int_{\theta_1}^{\theta_2} {\rm Re}\Big(1+\frac{zg''(z)}{g'(z)}\Big)\,d\theta>-\pi,\quad z=re^{i\theta},
$$
for each $r~(0<r<1)$ and for each pair of real numbers $\theta_1$ and $\theta_2$ with $\theta_1<\theta_2$.
If a locally univalent analytic function $g$ defined in $\mathbb{D}$ satisfies 
$$
{\rm Re}\Big(1+\frac{zg''(z)}{g'(z)}\Big)>-1/2,
$$
then by the Kaplan characterization it follows easily that $g$ is close-to-convex in $\mathbb{D}$ (here 
$\theta_1$ and $\theta_2$ are chosen as $0$ and $2\pi$ respectively) and hence $g$ is univalent in $\mathbb{D}$.
This generates the following subclass of the class of close-to-convex (univalent) functions:
$$
\mathcal{C}_\infty:=\left\{g\in \mathcal{A}:\,{\rm Re}\Big(1+ \frac{zg''(z)}{g'(z)}\Big)>-\frac{1}{2}\right\}.
$$
This class of functions is also studied recently by several authors in different contexts; 
for instance see \cite{BP14,LP17,MYLP14,PSY14}
and references therein.

Now we are ready to state our sufficient condition for functions $g$ to be in 
$\mathcal{C}_\infty$ in terms of their Schwarzian derivatives bounded by small quantity.

\begin{corollary}\label{corollary2}
Let $g\in \mathcal{A}$ and $|a_2|=\eta<3/7.$ Suppose that
$$
\sup _{z \in \mathbb{D}}|S_g(z)|=2\delta
$$
where $\delta=\delta(\eta)$ satisfies the inequality
\begin{equation}\label{eq:250}
14\eta+11\delta(1+\eta)e^{\delta/2}<6.
\end{equation}
Then $g\in \mathcal{C}_\infty$
and hence $g$ is close-to-convex function.
\end{corollary}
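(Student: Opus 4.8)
The plan is to obtain Corollary~\ref{corollary2} from Theorem~\ref{main-thm2} by passing to the limit $\beta\to\infty$. First I would record that the class $\mathcal{C}_\infty$ is precisely the limiting member of the family $\{\mathcal{C}_\beta\}_{\beta\ge 3/2}$: as $\beta\to\infty$ the upper constraint ${\rm Re}(1+zg''/g')<\beta$ in the definition of $\mathcal{C}_\beta$ becomes vacuous, while the lower bound sharpens according to $\frac{-\beta}{2\beta-3}\to-\frac12$. Thus membership in $\mathcal{C}_\infty$ asks only for the single inequality ${\rm Re}(1+zg''/g')>-1/2$, and it is exactly this one-sided estimate from the proof of Theorem~\ref{main-thm2} that I intend to isolate.

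Next I would compute the limits of the constants appearing in Theorem~\ref{main-thm2}. Since $\frac{\beta-1}{\beta+1}\to 1$ and $\frac{3(\beta-1)}{7\beta-9}\to\frac37$, while $\frac{\beta+3}{\beta+1}\to 1$ and $\frac{11\beta-15}{7\beta-9}\to\frac{11}{7}$, the binding terms for large $\beta$ are the second ones, so $\phi(\beta)\to 3/7$ and $\psi(\beta)\to 11/7$. Hence the admissibility hypothesis $\eta<\phi(\beta)$ passes to $\eta<3/7$, and substituting the binding (second) terms into \eqref{e:4}, clearing the factor $7\beta-9$, and comparing leading coefficients in $\beta$ turns it into
$$
14\eta+11\delta(1+\eta)e^{\delta/2}<6,
$$
which is precisely \eqref{eq:250}. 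This identification confirms that the hypotheses of the corollary are the $\beta=\infty$ specialization of those of Theorem~\ref{main-thm2}.

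With this matching in hand, I would carry out the lower-bound estimate from the proof of Theorem~\ref{main-thm2} verbatim, but with the limiting constants $3/7$ and $11/7$ in place of $\phi(\beta),\psi(\beta)$. Because the target bound $\frac{-\beta}{2\beta-3}$ tends to $-1/2$ and the estimate degrades continuously in $\beta$, this yields ${\rm Re}(1+zg''(z)/g'(z))>-1/2$ for all $z\in\mathbb{D}$, that is $g\in\mathcal{C}_\infty$. Finally, invoking the Kaplan characterization recalled just above the statement with the choice $\theta_1=0$, $\theta_2=2\pi$, the bound ${\rm Re}(1+zg''/g')>-1/2$ forces $\int_0^{2\pi}{\rm Re}(1+zg''/g')\,d\theta>-\pi$, so $g$ is close-to-convex.

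The point demanding care is the passage to the limit. Applying Theorem~\ref{main-thm2} at any finite $\beta$ only delivers ${\rm Re}(1+zg''/g')>\frac{-\beta}{2\beta-3}$, and since $\frac{-\beta}{2\beta-3}<-1/2$ for every finite $\beta>3/2$, no single application lands $g$ in $\mathcal{C}_\infty$; worse, a naive supremum over $\beta$ produces only the non-strict bound $\ge -1/2$. The strictness of \eqref{eq:250} is what supplies the missing slack: it guarantees a uniform gap in \eqref{e:4} for all large $\beta$, and this gap should survive the limit to give the strict inequality $>-1/2$. I expect the main obstacle to be verifying that this gap does not collapse, equivalently that the one-sided estimate underlying Theorem~\ref{main-thm2} remains strict as $\beta\to\infty$; re-running that estimate directly at the limiting constants is the cleanest way to secure it.
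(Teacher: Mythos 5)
Your proposal is correct and follows the paper's intended route: the corollary is the $\beta\to\infty$ specialization of Theorem~\ref{main-thm2}, with $\phi(\beta)\to 3/7$ and $\psi(\beta)\to 11/7$ turning the hypothesis \eqref{e:4} into \eqref{eq:250}. Your observation that one must re-run the ($\beta$-independent) lower-bound estimate from the proof of Theorem~\ref{main-thm2} directly against $-1/2$, rather than take a supremum of the conclusions at finite $\beta$ (which would only yield the non-strict bound), is exactly the right way to secure the strict inequality that the paper leaves implicit.
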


%%%%%%%%%%%%%%%%%%%%%%%%%%%%%%%%%%%%%%
%%%%%%%%%% section 3 %%%%%%%%%%%%%%%%%%%
%%%%%%%%%%%%%%%%%%%%%%%%%%%%%%%%%%%%%%%%%

\section{Preliminary results}\label{sec3}

\subsection*{Connection with a linear differential equation}

In this section we study a relationship between Schwarzian derivative of a meromorphic function $f$ 
and solution of a second order linear differential equation depending on $f$.
 
Recall the following lemma from Duren \cite[p. 259]{Dur83}. 
\begin{lemma}\label{l:-1}
For a given analytic function $p(z)$, a meromorphic function $f$ has the Schwarzian derivative
of the form 
$S_f(z)=2p(z)$ if and only if $f(z)=w_1(z)/w_2(z)$ for any pair of linearly independent solutions 
$w_1(z)$ and $w_2(z)$ of the linear differential equation
\begin{equation}\label{eq:3}
w''+p(z)w=0.
\end{equation}
\end{lemma}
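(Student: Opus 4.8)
The plan is to prove the two implications by direct computation, the whole argument hinging on the fact that \eqref{eq:3} carries no first-order term, so that the Wronskian of two solutions is constant.

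I would first treat sufficiency. Let $w_1, w_2$ be linearly independent solutions of \eqref{eq:3} and put $f = w_1/w_2$, a meromorphic function whose poles occur only where $w_2$ vanishes. Since \eqref{eq:3} has no $w'$ term, Abel's identity shows the Wronskian $W = w_1'w_2 - w_1 w_2'$ is a nonzero constant, and then $f' = W/w_2^2$, so that $f''/f' = -2w_2'/w_2$. Feeding this into the definition of $S_f$, the two terms proportional to $(w_2'/w_2)^2$ cancel and one is left with $S_f = -2w_2''/w_2$; replacing $w_2''$ by $-p w_2$ via \eqref{eq:3} gives $S_f = 2p$. The relation $f' = W/w_2^2 \neq 0$ simultaneously confirms that $f$ is locally univalent away from the zeros of $w_2$, so that $S_f$ is well defined.

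For the converse, suppose $S_f = 2p$ with $f$ locally univalent. Reading the identity $f' = W/w_2^2$ backwards suggests the candidates $w_2 = (f')^{-1/2}$ and $w_1 = f\,(f')^{-1/2}$, with a local branch of the square root fixed, which is admissible because $f' \neq 0$. Differentiating $w_2$ twice and using $p = \tfrac12 f'''/f' - \tfrac34 (f''/f')^2$, the combination $w_2'' + p w_2$ telescopes to $0$, so $w_2$ solves \eqref{eq:3}; since $w_1 = f w_2$, one computes $w_1'' + p w_1 = f'' w_2 + 2 f' w_2'$, which vanishes once $w_2'$ is written out, so $w_1$ is a solution as well. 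Their Wronskian equals $f' w_2^2 = 1 \neq 0$, hence the pair is linearly independent, and $w_1/w_2 = f$ by construction.

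The phrase ``for any pair'' is then explained by the observation that two bases of the solution space differ by an invertible constant matrix, so the corresponding ratios are M\"obius transformations of one another; as the text recalls, M\"obius maps are exactly the functions of vanishing Schwarzian, and together with the composition law $S_{T \circ f} = S_f$ this shows that every such ratio carries the same Schwarzian $2p$. I expect the only genuinely delicate point to be the selection of a single-valued branch of $(f')^{-1/2}$ in the converse; on the simply connected disk, or on the punctured disk away from the pole of $f$, this is harmless, and a different choice of branch merely rescales $w_1, w_2$ by a common constant, leaving $f = w_1/w_2$ unchanged.
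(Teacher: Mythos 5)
Your proof is correct. The paper does not prove this lemma at all --- it simply recalls it from Duren \cite[p.~259]{Dur83} --- and your argument (constancy of the Wronskian for sufficiency, the explicit solutions $w_2=(f')^{-1/2}$, $w_1=fw_2$ for necessity, and the M\"obius-invariance of $S_f$ to handle ``any pair'') is precisely the classical proof found there, with all computations checking out.
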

Note that an example satisfying Lemma~\ref{l:-1} is described in the proof of 
Theorem~\ref{main-thm1}(b).
Assume now that $w_1(z)$ and $w_2(z)$ satisfy the following conditions:
\begin{align*}
w_1(0)=1, & ~w_2(0)=0;\\
w_1'(0)=0, & ~w_2'(0)=1.
\end{align*}
Clearly $w_1(0)$ and $w_2(0)$ are linearly independent since the Wronskian 
$W(w_1(0),w_2(0))$ is non-vanishing.
Recall that
\begin{equation} \label{eq:6}
f(z)=\frac{w_1(z)}{w_2(z)} =\cfrac{1}{z}+b_0+b_1z+\cdots.
\end{equation}
Hence, a simple computation on logarithmic derivative of $f'(z)$ leads to
$$
\cfrac{f''(z)}{f'(z)}=\frac{w_2(z)w_1''(z)-w_1(z)w_2''(z)}{w_2(z)w_1'(z)-w_1(z)w_2'(z)}-2\frac{w_2'(z)}{w_2(z)}.
$$
Since $w_1(z)$ and $w_2(z)$ satisfy \eqref{eq:3}, it follows that
$$
\cfrac{f''(z)}{f'(z)}=-2\frac{w_2'(z)}{w_2(z)},
$$
and hence we have the relation
\begin{equation}\label{eq:4}
{\rm Re}\Big(1+\frac{zf''(z)}{f'(z)}\Big) 
= 1-2{\rm Re}\Big(\frac{zw_2'(z)}{w_2(z)}\Big).
\end{equation}

%%%%%%%%%%%%%%%%%%%%%%%%%%%%%%%

\subsection*{The Function $\boldsymbol{2x-(1+\alpha)\tan x}$.}
For $0\le \alpha<1$, we set
$$
h(x):=2x-(1+\alpha)\tan x.
$$
Derivative test for $h(x)$ tells us that 
$h(x)$ is decreasing 
%if $x>\arctan \Big(\sqrt{(1-\alpha)/(1+\alpha)}\,\Big)$.
%%$$
%%h'(x)<0 \mbox{ if $x>\arctan \Big(\sqrt{\cfrac{1-\alpha}{1+\alpha}}\,\Big)$}.
%%$$
%Clearly $h(\sqrt{c_\alpha})=0$ and $\sqrt{c_\alpha}>\arctan (\sqrt{(1-\alpha)/(1+\alpha)})$, implies that $h(x)$ is decreasing 
in $(\arctan (\sqrt{(1-\alpha)/(1+\alpha)}) ,\pi/2)$.

Then the following lemma is useful.

\begin{lemma}\label{l:0}
Let $\beta<\pi/2$ be the smallest positive root of  $h(x)=2x-(1+\alpha)\tan x=0$ for some $\alpha>0$. Then 
$$
\beta\ge \arctan\sqrt{(1-\alpha)/(1+\alpha)}
$$
holds true.
\end{lemma}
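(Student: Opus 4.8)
The plan is to exploit the monotonicity structure of $h$ that the preceding derivative test already reveals, and to locate the smallest positive root relative to the critical point $x_0:=\arctan\sqrt{(1-\alpha)/(1+\alpha)}$. The whole argument is a one-variable sign analysis; the only point needing care is to confirm that the rising branch of $h$ contributes no zero, so that the smallest positive zero is forced past $x_0$.

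First I would record the elementary behaviour of $h$ at the origin. Since $\tan 0=0$ we have $h(0)=0$, so $x=0$ is always a root, and differentiating gives $h'(x)=2-(1+\alpha)\sec^2 x$, whence $h'(0)=2-(1+\alpha)=1-\alpha>0$ because $\alpha<1$. Thus $h$ strictly increases as it leaves the origin. Next I would pin down the unique critical point in $(0,\pi/2)$: setting $h'(x)=0$ is equivalent to $\sec^2 x=2/(1+\alpha)$, i.e.\ $\tan^2 x=(1-\alpha)/(1+\alpha)$, which gives precisely $x_0=\arctan\sqrt{(1-\alpha)/(1+\alpha)}$. Since $\tan^2 x$ is strictly increasing on $(0,\pi/2)$, the sign of $h'$ switches exactly once, from positive to negative, so $h$ is strictly increasing on $(0,x_0)$ and strictly decreasing on $(x_0,\pi/2)$ --- exactly the decreasing behaviour noted just before the lemma.

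The conclusion then follows by tracking the sign of $h$. On $(0,x_0)$ the strict increase from $h(0)=0$ forces $h(x)>0$, and in particular $h(x_0)>0$; hence $h$ has no positive zero on $(0,x_0]$. Consequently the smallest positive root $\beta$ must lie in $(x_0,\pi/2)$, where $h$ decreases from the positive value $h(x_0)$ toward $-\infty$ as $x\to(\pi/2)^{-}$ (because $\tan x\to+\infty$), producing exactly one zero there. This yields $\beta>x_0=\arctan\sqrt{(1-\alpha)/(1+\alpha)}$, which is in fact slightly stronger than the stated inequality $\beta\ge\arctan\sqrt{(1-\alpha)/(1+\alpha)}$. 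I expect no genuine obstacle: the argument is purely the intermediate value theorem applied to a function whose monotonicity has already been determined, and the one subtlety --- that the increasing branch starting at the trivial root $x=0$ cannot produce another zero before $x_0$ --- is handled directly by the strict positivity of $h$ on $(0,x_0]$.
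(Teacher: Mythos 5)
Your proof is correct, but it takes a genuinely different route from the paper's. The paper argues by contradiction: assuming $0<\beta<\arctan\sqrt{(1-\alpha)/(1+\alpha)}$, it uses $h(\beta)=0$ to write $\alpha=2\beta/\tan\beta-1$, substitutes this into $\tan^2\beta<(1-\alpha)/(1+\alpha)$, and simplifies to $\sec^2\beta<\tan\beta/\beta$, i.e.\ $2\beta<\sin 2\beta$, which is impossible. You instead run a direct sign analysis of $h$ itself: you identify $x_0=\arctan\sqrt{(1-\alpha)/(1+\alpha)}$ as the unique critical point of $h$ on $(0,\pi/2)$, observe $h(0)=0$ and $h'(0)=1-\alpha>0$, conclude $h>0$ on $(0,x_0]$, and hence that the first positive zero is forced past $x_0$. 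Your approach buys slightly more: it yields the strict inequality $\beta>x_0$, and it also establishes existence and uniqueness of the root in $(x_0,\pi/2)$, which the paper simply takes for granted; it also meshes naturally with the derivative-test remark the paper places just before the lemma. One small caveat: you invoke $\alpha<1$ to get $h'(0)>0$, which is not in the lemma's literal hypothesis ``for some $\alpha>0$'' but is the paper's standing assumption $0\le\alpha<1$; in any case, for $\alpha\ge 1$ one has $(1+\alpha)\tan x\ge 2\tan x>2x$ on $(0,\pi/2)$, so no positive root below $\pi/2$ exists and the statement is vacuous there. It would be worth stating this explicitly where you first use it.
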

\begin{proof}
Given that $h(\beta)=0=2\beta-(1+\alpha)\tan \beta$. This gives
\begin{equation}\label{lm1}
\alpha=\cfrac{2\beta}{\tan \beta}-1.
\end{equation}

On contrary, suppose that $0< \beta < \arctan\sqrt{(1-\alpha)/(1+\alpha)}<\pi/2$. This implies that 
$$
%\tan \beta \leq \sqrt{\cfrac{1-\alpha}{1+\alpha}} \Rightarrow 
\tan^2 \beta < \cfrac{1-\alpha}{1+\alpha}.
$$
Substituting the value of $\alpha$ in (\ref{lm1}), we obtain
$$
\tan^2 \beta < \cfrac{\tan \beta}{\beta}-1
$$
equivalently,
$$
\sec^2 \beta < \cfrac{\tan \beta}{\beta} \iff 2\beta < \sin 2\beta,
$$
% or
% $$
% \beta \sec^2 \beta < \tan \beta \Rightarrow 2\beta \leq \sin 2\beta
% $$
which is a contradiction. Thus, the proof of our lemma is complete.
\end{proof}
%$$
%f(x)=2\sqrt{x}-(1+\alpha) \tan \sqrt{x}=0.
%$$

Let $c_\alpha$ be the smallest positive root of the equation (\ref{eq:2}).
Since $h(\sqrt{c_\alpha})=0$, it follows by Lemma~\ref{l:0} that
\begin{equation}\label{eq:25}
 h(x)  \left\{ \begin{array}{ll}
         \geq 0, & \mbox{for $0 \leq x \leq \sqrt{c_\alpha}$};\\
        < 0, & \mbox{for $\sqrt{c_\alpha}<x < \pi/2$}.\end{array} \right. 
\end{equation}
If we replace $x$ by $x\sqrt{c}$, $c>0,$ in (\ref{eq:25}), we obtain
\begin{equation} \label{eq:160}
h(x\sqrt{c})=2x\sqrt{c}-(1+\alpha)\tan(x\sqrt{c}) \geq 0 \mbox{ for $0 \leq x\sqrt{c} \leq \sqrt{c_\alpha}$} 
\end{equation}
and
\begin{equation} \label{eq:17}
h(x\sqrt{c})=2x\sqrt{c}-(1+\alpha)\tan(x\sqrt{c}) < 0 \mbox{ for $\sqrt{c_\alpha}<x\sqrt{c}<\pi/2$}. 
\end{equation}
We may have the following two cases when $h(x\sqrt{c})$ is negative.

{\bf Case 1:}
If $c \leq c_\alpha$, then \eqref{eq:17} gives that $h(x\sqrt{c})$ is also negative in  $[1,\pi/2\sqrt{c})$.

{\bf Case 2:} 
If $c > c_\alpha$, then \eqref{eq:17} gives that $h(x\sqrt{c})$ is also negative in $(\sqrt{c_\alpha/c},1).$

%%%%%%%%%%%%%%%%%%% lemmma-1 %%%%%%%%%%%%%%%%

In the sequel, we collect the following lemmas to be used in the proof of Theorem~\ref{main-thm1}.
\begin{lemma}\label{l:1}
A function $f\in\mathcal{B}$ in the form \eqref{eq:6} is meromorphically convex of order $\alpha$ 
if and only if $w_2(z)$ is starlike of order $(\alpha+1)/2$.  
\end{lemma}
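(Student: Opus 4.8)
The plan is to deduce the equivalence directly from the identity \eqref{eq:4}, so that the substance of the lemma becomes a one-line manipulation of real parts, with only the non-vanishing side conditions requiring separate attention. First I would record the structural facts forced by the standing hypothesis $f\in\mathcal{B}$. Writing $f=w_1/w_2$ as in \eqref{eq:6}, the prescribed initial conditions give the constant Wronskian $w_1w_2'-w_1'w_2\equiv1$, so $w_1$ and $w_2$ have no common zero; the normalization $w_2(0)=0$, $w_2'(0)=1$ moreover shows $w_2(z)=z+\cdots$, i.e. $w_2\in\mathcal{A}$, and since $(1+\alpha)/2\in[1/2,1)$ for $0\le\alpha<1$, the notion ``$w_2$ starlike of order $(1+\alpha)/2$'' is meaningful. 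Because $f\in\mathcal{B}$ is analytic in $\mathbb{D}_0$ we have $w_2\ne0$ on $\mathbb{D}_0$, and because $g=1/f=w_2/w_1\in\mathcal{A}$ by \eqref{fg-eqn} is analytic on all of $\mathbb{D}$, the absence of common zeros forces $w_1\ne0$ on $\mathbb{D}$; hence $f=w_1/w_2\ne0$ on $\mathbb{D}_0$ automatically. In particular the clause $f(z)\ne0$ in the definition of meromorphic convexity of order $\alpha$ is already guaranteed, and the lemma reduces to the equivalence of the two real-part inequalities.

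With this reduction in hand, I would simply invoke \eqref{eq:4} and compute:
\begin{align*}
-\,{\rm Re}\Big(1+\frac{zf''(z)}{f'(z)}\Big)>\alpha
&\Longleftrightarrow -\Big(1-2\,{\rm Re}\frac{zw_2'(z)}{w_2(z)}\Big)>\alpha\\
&\Longleftrightarrow 2\,{\rm Re}\frac{zw_2'(z)}{w_2(z)}>1+\alpha\\
&\Longleftrightarrow {\rm Re}\frac{zw_2'(z)}{w_2(z)}>\frac{1+\alpha}{2},
\end{align*}
the last inequality being exactly the statement that $w_2$ is starlike of order $(1+\alpha)/2$. Since each step is a genuine equivalence valid for every $z\in\mathbb{D}$, both implications of the lemma are obtained at once.

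The computation itself is immediate once \eqref{eq:4} is available, so I do not expect a serious analytic obstacle here; the only delicate point is the bookkeeping in the first paragraph, namely making sure that the non-vanishing and normalization hypotheses implicit in ``$f\in\mathcal{B}$'' and in ``$w_2$ starlike of order $(1+\alpha)/2$'' match up, and in particular verifying that $f\ne0$ on $\mathbb{D}_0$ need not be imposed as an extra assumption. Once that is settled, the equivalence follows verbatim from the displayed chain.
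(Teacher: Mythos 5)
Your proof is correct and follows essentially the same route as the paper: both rest on the identity \eqref{eq:4} and the immediate equivalence $-\,{\rm Re}\bigl(1+zf''/f'\bigr)>\alpha \iff {\rm Re}\bigl(zw_2'/w_2\bigr)>(1+\alpha)/2$. Your extra bookkeeping on the Wronskian, the normalization $w_2(z)=z+\cdots$, and the non-vanishing of $f$ on $\mathbb{D}_0$ is a welcome addition that the paper's own proof leaves implicit.
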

% Functions of the form (\ref{eq:0}) are meromorphically convex of order $\alpha$ if 
% $$
% -\Big(1+{\rm Re}\Big\{\frac{zf''(z)}{f'(z)}\Big\}\Big)>\alpha \quad(z \in \mathbb{D}),
% $$
% follows from the definition. 
% Also 
\begin{proof}
Condition (\ref{eq:4}) is equivalent to
$$
-{\rm Re}\Big(1+\frac{zf''(z)}{f'(z)}\Big) =  -1+2{\rm Re}\Big(\frac{zw_2'(z)}{w_2(z)}\Big),
$$
which yields
$$
-{\rm Re}\Big(1+\frac{zf''(z)}{f'(z)}\Big) > \alpha \iff {\rm Re}\Big(\frac{zw_2'(z)}{w_2(z)}\Big)> \frac{\alpha+1}{2}~.
$$
Since $w_2(0)=0$ and $w_2'(0)=1$, $w_2(z)$ is starlike of order $(\alpha+1)/2$.  Thus, completing the proof of our lemma.
\end{proof}

\begin{remark}
A simple computation using the identity (\ref{eq:4}) yields
$$
{\rm Re}\,\Big(\frac{zw_1'(z)}{w_1(z)}\Big)=\frac{1}{2}+{\rm Re}\,\Big(\frac{zf'(z)}{f(z)}\Big)-\frac{1}{2}{\rm Re}\,\Big(1+\frac{zf''(z)}{f'(z)}\Big).
$$
Therefore, the function $w_1$ is not necessarily starlike when the function
$f$ is meromorphically convex.
\end{remark}

%%%%%%%%%%%%%%%% lemma-2 %%%%%%%%%%%%%%%%%%%%%%%%%%%%%%

\begin{lemma}\label{l:2}
For $0 \leq \alpha <1 $, let $c_\alpha$ ~$(0<c\leq c_\alpha)$ be the root of the equation given by $(\ref{eq:2})$. Then we have
\begin{equation}\label{eq:11}
{\rm Re}(z\sqrt{c}\cot(z\sqrt{c})) > \frac{\alpha+1}{2}, \quad |z|<1.
\end{equation}
\end{lemma}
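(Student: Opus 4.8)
The plan is to substitute $\zeta=z\sqrt{c}$ and prove the equivalent statement that ${\rm Re}(\zeta\cot\zeta)>(\alpha+1)/2$ throughout the disk $|\zeta|<\sqrt{c}$. Since $c\le c_\alpha$ and, as noted in the discussion around \eqref{eq:25}, $\sqrt{c_\alpha}<\pi/2$, the function $\zeta\cot\zeta$ is analytic on the closed disk $|\zeta|\le\sqrt{c}$: its only potential singularity, at $\zeta=0$, is removable with value $1$, and the nearest pole of $\cot$ lies at $\zeta=\pi$. Hence $u(\zeta):={\rm Re}(\zeta\cot\zeta)$ is harmonic and non-constant there, so by the strong minimum principle its infimum over the open disk $|\zeta|<\sqrt{c}$ equals its minimum over the boundary circle $|\zeta|=\sqrt{c}$ and, moreover, every interior value is \emph{strictly} larger than $\min_{|\zeta|=\sqrt{c}}u$. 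It therefore suffices to locate this boundary minimum.

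The crux is to show that the boundary minimum occurs at the real points $\zeta=\pm\sqrt{c}$. First I would invoke the Mittag--Leffler expansion
\[
\zeta\cot\zeta=1-2\sum_{n=1}^{\infty}\frac{\zeta^{2}}{n^{2}\pi^{2}-\zeta^{2}},
\]
which converges locally uniformly on $|\zeta|<\pi$, so that ${\rm Re}(\zeta\cot\zeta)=1-2\sum_{n\ge1}{\rm Re}\big(\zeta^{2}/(n^{2}\pi^{2}-\zeta^{2})\big)$ termwise. It is then enough to prove that each term ${\rm Re}\big(\zeta^{2}/(n^{2}\pi^{2}-\zeta^{2})\big)$ is \emph{maximized} on the circle at $\zeta$ real, \emph{with the same maximizer for every} $n$, for then the subtracted series is maximized there and $u$ is minimized there. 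Writing $u=\zeta^{2}$ (so $|u|=c$), $a=n^{2}\pi^{2}$, and $t={\rm Re}\,u\in[-c,c]$, a short computation gives
\[
{\rm Re}\frac{u}{a-u}=-1+\frac{a(a-t)}{a^{2}+c^{2}-2at},
\]
and the $t$-derivative of the last fraction has numerator $a^{2}-c^{2}>0$ (here $a\ge\pi^{2}>c$). Thus the expression is strictly increasing in $t$ and attains its maximum at $t=c$, forcing ${\rm Im}\,u=0$, i.e. $\zeta=\pm\sqrt{c}$, independently of $n$. Consequently $\min_{|\zeta|=\sqrt{c}}u=\sqrt{c}\cot\sqrt{c}$.

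Finally I would compare this boundary value with the target. On $(0,\pi)$ the map $x\mapsto x\cot x$ is strictly decreasing, since $\frac{d}{dx}(x\cot x)=(\tfrac12\sin 2x-x)/\sin^{2}x<0$; as $\sqrt{c}\le\sqrt{c_\alpha}<\pi/2$ this yields $\sqrt{c}\cot\sqrt{c}\ge\sqrt{c_\alpha}\cot\sqrt{c_\alpha}$. The defining equation \eqref{eq:2}, namely $2\sqrt{c_\alpha}=(1+\alpha)\tan\sqrt{c_\alpha}$, rewrites as $\sqrt{c_\alpha}\cot\sqrt{c_\alpha}=(\alpha+1)/2$. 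Combining the strict interior inequality from the minimum principle with these two facts gives, for $|z|<1$,
\[
{\rm Re}\big(z\sqrt{c}\cot(z\sqrt{c})\big)>\sqrt{c}\cot\sqrt{c}\ge\sqrt{c_\alpha}\cot\sqrt{c_\alpha}=\frac{\alpha+1}{2},
\]
which is the claim.

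The one genuinely delicate step is the boundary minimization of the second paragraph. The partial-fraction decomposition is precisely what reduces the rotational optimization of ${\rm Re}(\zeta\cot\zeta)$ to a single monotone function of $t={\rm Re}(\zeta^{2})$, thereby sidestepping the awkward two-variable extremal problem for ${\rm Re}(\zeta\cot\zeta)=\big(x\sin 2x+y\sinh 2y\big)/\big(\cosh 2y-\cos 2x\big)$ that one would otherwise face; the fact that the maximizer $t=c$ is common to all $n$ is what lets the termwise maxima assemble into a genuine minimum of the series. It is worth remarking that $w_2(z)=\sin(z\sqrt{c})/\sqrt{c}$ solves $w''+cw=0$ with the normalization of \eqref{eq:6} and satisfies $zw_2'(z)/w_2(z)=z\sqrt{c}\cot(z\sqrt{c})$, so the lemma is exactly the assertion that this $w_2$ is starlike of order $(\alpha+1)/2$, in agreement with Lemma~\ref{l:1}.
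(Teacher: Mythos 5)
Your proof is correct, but it follows a genuinely different route from the paper's. The paper argues directly in real coordinates: writing $z=x+iy$, it expands ${\rm Re}(z\sqrt{c}\cot(z\sqrt{c}))$ into trigonometric and hyperbolic functions and reduces the claim to the pointwise inequality \eqref{eq:12}, which it verifies quadrant by quadrant because the factor $2x\sqrt{c}-(1+\alpha)\tan(\sqrt{c}x)$ is nonnegative by \eqref{eq:160} while $(1+\alpha)\tanh(\sqrt{c}y)-2\sqrt{c}y$ is nonpositive. You instead reduce to the boundary circle $|\zeta|=\sqrt{c}$ by the strong minimum principle, locate the boundary minimum at the real points via the Mittag--Leffler expansion of $\zeta\cot\zeta$ (each partial-fraction term becoming a monotone function of ${\rm Re}(\zeta^2)$, with a common maximizer, which is the decisive observation), and then close with the monotonicity of $x\cot x$ and the identity $\sqrt{c_\alpha}\cot\sqrt{c_\alpha}=(\alpha+1)/2$ coming from \eqref{eq:2}. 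I checked the computation ${\rm Re}\bigl(u/(a-u)\bigr)=-1+a(a-t)/(a^2+c^2-2at)$ and the sign of its $t$-derivative (the numerator is actually $a(a^2-c^2)$, but the sign is unchanged), and the argument goes through since $a=n^2\pi^2>c$. Your approach buys a structurally cleaner and sharper result --- it identifies the exact infimum $\sqrt{c}\cot\sqrt{c}$ and gets strictness for free from the minimum principle, whereas the paper's product decomposition needs a little extra care at points where both factors vanish --- at the cost of importing the partial-fraction expansion and a termwise max/sum interchange; the paper's proof is more elementary and self-contained, using only real calculus and the sign information already established in \eqref{eq:160}. One small notational point: you reuse $u$ both for the harmonic function ${\rm Re}(\zeta\cot\zeta)$ and for $\zeta^2$; that should be cleaned up.
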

\begin{proof}
Substituting $z=x+iy$ in (\ref{eq:11}), we see that the desired inequality is equivalent to
$$
2{\rm Re}\Big(\sqrt{c}(x+iy)\cfrac{\cos(\sqrt{c}(x+iy))}{\sin(\sqrt{c}(x+iy))}\Big) > \alpha+1.
$$
This is, using the basic identities $2{\rm Re}\,w=w+\overline{w}$, $\cos (iy)=\cosh (y)$, and $\sin (iy)=i\sinh (y)$, we see that it is equivalent to proving
\begin{align*}
2x\sqrt{c}\sin (\sqrt{c}x) \cos (\sqrt{c}x) 
&+2 y\sqrt{c} \sinh (\sqrt{c}y) \cosh (\sqrt{c}y)\\
& > (1+\alpha)(\sin^2(\sqrt{c}x)+\sinh^2(\sqrt{c}y)).
\end{align*}
So, it suffices to prove the inequality 
\begin{equation}\label{eq:12}
\begin{array}{lr}
\sin(\sqrt{c}x) \cos(\sqrt{c}x) [2\sqrt{c}x-(1+\alpha)\tan(\sqrt{c}x)]\\
\hspace*{4cm}>  \sinh(\sqrt{c}y) \cosh(\sqrt{c}y)[(1+\alpha)\tanh (\sqrt{c}y)-2y\sqrt{c}y]
\end{array}
\end{equation}
for $0<c\leq c_\alpha$ and $x^2+y^2<1.$ First consider the points $x,~y$ in the first quadrant. Then we see that
$\sin(\sqrt{c}x)$, $ \cos(\sqrt{c}x)$, $\sinh(\sqrt{c}y)$ and $\cosh(\sqrt{c}y)$ are all positive since $c<c_\alpha<\pi^2/4$.
Also $2x\sqrt{c}-(1+\alpha)\tan(\sqrt{c}x)$ is positive which follows from (\ref{eq:160}).
On the other hand, $(1+\alpha)\tanh (\sqrt{c}y)-2(\sqrt{c}y)$ 
is non-positive because
$g(y)=(1+\alpha)\tanh (\sqrt{c}y)-2(\sqrt{c}y)$ is decreasing, hence for $y \geq 0$ we obtain
$$
g(y)=(1+\alpha)\tanh (\sqrt{c}y)-2\sqrt{c}y \leq 0.
$$
Hence, the inequality (\ref{eq:12}) holds true in the first quadrant. Now if we replace $x$ by $-x$ and $y$ by $-y$ then the inequality (\ref{eq:12}) remains same in all the other
quadrants of $\mathbb{D}$. The desired inequality thus follows.
\end{proof}

The following results of Gabriel are also useful.

%%%%%%%%%%%%%%%%%%%%%%%% lemma 3 %%%%%%%%%%%%%%%%%%%%%%%%%%%%%%

\begin{lemma}\cite[Lemma 4.1]{Gab55} \label{l:3}
If $w(z)$ satisfies $(\ref{eq:3})$ with $w(0)=0$ and $w'(0)=1,$ then for $0<\rho\le r<1$ and for a fixed $\theta\in [0,2\pi]$, we have
\begin{equation}\label{eq:7}
|w(re^{i\theta})|^2 {\rm Re}\Big(\frac{re^{i\theta}w'(re^{i\theta})}{w(re^{i\theta})}\Big)= r \int_0 ^r |w'(\rho e^{\theta})|^2 d\rho -r \int_0 ^r {\rm Re}(\rho^2e^{2i\theta}p(\rho e^{i\theta})) 
\cfrac{|w(\rho e^{i\theta})|^2}{\rho^2} d\rho.
\end{equation}
\end{lemma}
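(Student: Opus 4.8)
The plan is to fix the argument $\theta$ and reduce \eqref{eq:7} to a one-variable computation along the ray $z=\rho e^{i\theta}$, $\rho\in[0,r]$. I would set $W(\rho):=w(\rho e^{i\theta})$, so that by the chain rule $W'(\rho)=e^{i\theta}w'(\rho e^{i\theta})$ and $W''(\rho)=e^{2i\theta}w''(\rho e^{i\theta})$; then \eqref{eq:3} becomes the ordinary differential equation $W''(\rho)=-e^{2i\theta}p(\rho e^{i\theta})W(\rho)$. The first step is to rewrite the left-hand side of \eqref{eq:7} in this notation. Using the elementary identity $|w|^2\,{\rm Re}(\zeta w'/w)={\rm Re}(\zeta w'\overline{w})$ together with $re^{i\theta}w'(re^{i\theta})=rW'(r)$ and $\overline{w(re^{i\theta})}=\overline{W(r)}$, the left-hand side becomes $r\,{\rm Re}\big(W'(r)\overline{W(r)}\big)$.

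Next I would produce an integral representation for ${\rm Re}\big(W'(r)\overline{W(r)}\big)$. Differentiating the product gives $\frac{d}{d\rho}\big(W'\overline{W}\big)=W''\overline{W}+|W'|^2$, and substituting $W''=-e^{2i\theta}p(\rho e^{i\theta})W$ yields $\frac{d}{d\rho}\big(W'\overline{W}\big)=|W'|^2-e^{2i\theta}p(\rho e^{i\theta})|W|^2$. Taking real parts and integrating from $0$ to $r$, the boundary term at $\rho=0$ drops out because $W(0)=w(0)=0$; this is the only place the hypothesis on the initial value is used. Hence ${\rm Re}\big(W'(r)\overline{W(r)}\big)=\int_0^r|W'(\rho)|^2\,d\rho-\int_0^r{\rm Re}\big(e^{2i\theta}p(\rho e^{i\theta})\big)|W(\rho)|^2\,d\rho$.

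Finally I would translate back to $w$ and multiply through by $r$. Since $|W'(\rho)|=|w'(\rho e^{i\theta})|$, and since ${\rm Re}\big(e^{2i\theta}p(\rho e^{i\theta})\big)|W(\rho)|^2={\rm Re}\big(\rho^2 e^{2i\theta}p(\rho e^{i\theta})\big)\,|w(\rho e^{i\theta})|^2/\rho^2$ with the $\rho^2$ factors cancelling, multiplication by $r$ reproduces \eqref{eq:7} exactly. There is no deep obstacle here: the argument is a direct computation, and the only points demanding care are the vanishing of the boundary contribution at the origin---which is exactly where $w(0)=0$ enters---and the cosmetic rearrangement of the second integrand so that its weight matches the $\rho^{-2}|w|^2$ form written in the statement.
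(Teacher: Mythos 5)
Your proof is correct: the reduction to the ray $z=\rho e^{i\theta}$, the identity $|w|^2\,{\rm Re}(\zeta w'/w)={\rm Re}(\zeta w'\overline{w})$, the substitution of $W''=-e^{2i\theta}p(\rho e^{i\theta})W$ into $\frac{d}{d\rho}(W'\overline{W})$, and the vanishing of the boundary term via $w(0)=0$ all check out, and the rewriting of the weight as $\rho^{-2}|w|^2$ is legitimate since $\rho^2>0$. The paper itself states this lemma without proof, citing Gabriel, and your argument is essentially the standard Green's-identity computation used there, so there is nothing to add.
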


%%%%%%%%%%%%%%%%%%%%%%%%% lemma-4 %%%%%%%%%%%%%%%%%%%%%%%%%%%%%%

\begin{lemma}\cite[Lemma 4.2]{Gab55} \label{l:4}
Let $y(\rho)$ and $y'(\rho)$ be continuous real functions of $\rho$ for $0\leq \rho < 1.$ For small values of $\rho$ let
$y(\rho)=O(\rho).$ Then 
\begin{equation}\label{eq:8}
r \int_0 ^r [y'(\rho)]^2 d\rho-cr  \int_0 ^r [y^2(\rho)] d\rho -r\sqrt{c} \cot (r\sqrt{c})\cdot y^2(r) \geq 0
\end{equation}
for $0<r<1$ and $c>0.$ Equality holds for 
$$
y(\rho)=c^{-1/2} \sin (\rho\sqrt{c}), \quad c>0.
$$
\end{lemma}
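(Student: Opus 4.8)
The plan is to prove the inequality obtained from \eqref{eq:8} after dividing by $r>0$, namely
\[
\int_0^r [y'(\rho)]^2\,d\rho - c\int_0^r y^2(\rho)\,d\rho - \sqrt{c}\cot(r\sqrt{c})\,y^2(r) \ge 0,
\]
by comparing $y$ with the extremal solution $\phi(\rho):=\sin(\rho\sqrt{c})$ of the companion equation $\phi''+c\phi=0$ with $\phi(0)=0$. The central device is the substitution $y(\rho)=\phi(\rho)u(\rho)$. This is legitimate on $(0,r]$ provided $\phi$ has no zero there, which holds because in the intended range $c\le c_\alpha<\pi^2/4$ together with $r<1$ force $r\sqrt{c}<\pi/2$; moreover the quotient $u=y/\phi$ extends continuously to $\rho=0$, since the hypothesis $y(\rho)=O(\rho)$ matches the behaviour $\phi(\rho)\sim\rho\sqrt{c}$.

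First I would expand $[y']^2-cy^2$ in terms of $u$ via $y'=\phi'u+\phi u'$, producing the three terms $(\phi')^2u^2+\phi^2(u')^2-c\phi^2u^2$ together with the cross term $2\phi\phi'\,uu'=\phi\phi'\,(u^2)'$. The key step is to integrate the cross term by parts and use the differential equation: because $\phi''=-c\phi$, one has $(\phi\phi')'=(\phi')^2-c\phi^2$, so that
\[
\int_0^r 2\phi\phi'\,uu'\,d\rho=\bigl[\phi\phi'\,u^2\bigr]_0^r-\int_0^r\bigl[(\phi')^2-c\phi^2\bigr]u^2\,d\rho.
\]
Substituting this back produces an exact cancellation of every term carrying the factor $(\phi')^2u^2$ or $c\phi^2u^2$, leaving the clean identity
\[
\int_0^r\bigl([y']^2-cy^2\bigr)\,d\rho=\int_0^r\phi^2(u')^2\,d\rho+\bigl[\phi\phi'\,u^2\bigr]_0^r.
\]

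Next I would evaluate the boundary term. At $\rho=r$, writing $u(r)=y(r)/\sin(r\sqrt{c})$ and $\phi(r)\phi'(r)=\sqrt{c}\sin(r\sqrt{c})\cos(r\sqrt{c})$ yields exactly $\sqrt{c}\cot(r\sqrt{c})\,y^2(r)$; at $\rho=0$ the term vanishes because $\phi(0)=0$ while $\phi'u^2$ stays bounded (again by $y=O(\rho)$). Moving this boundary contribution to the left-hand side leaves the manifestly nonnegative remainder $\int_0^r\phi^2(u')^2\,d\rho$, and multiplying through by $r>0$ recovers \eqref{eq:8}. The equality statement then falls out at once: the remainder vanishes precisely when $u'\equiv0$ on $(0,r)$ (where $\phi>0$), i.e.\ when $y$ is a constant multiple of $\sin(\rho\sqrt{c})$, and the normalisation $y(\rho)=c^{-1/2}\sin(\rho\sqrt{c})$ achieves it.

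The main obstacle I anticipate is not the algebra of the integration by parts, which is forced by the structure $\phi''=-c\phi$, but the two analytic points guarding the substitution $u=y/\phi$: verifying that $\phi$ has no zero in $(0,r)$, so the quotient is defined throughout, and that the boundary term at the origin genuinely vanishes. Both are controlled by the regularity hypothesis $y=O(\rho)$ together with the range restriction $r\sqrt{c}<\pi$; I would make these two justifications explicit, since without them the identity above is only formal.
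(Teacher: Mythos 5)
The paper does not prove this lemma at all: it is quoted verbatim from Gabriel \cite[Lemma 4.2]{Gab55} and used as a black box, so there is no internal proof to compare against. Your argument is the classical Jacobi/Picone substitution $y=\phi u$ with $\phi(\rho)=\sin(\rho\sqrt{c})$, and it is correct: the identity $\int_0^r([y']^2-cy^2)\,d\rho=\int_0^r\phi^2(u')^2\,d\rho+\bigl[\phi\phi'u^2\bigr]_0^r$ follows exactly as you describe from $(\phi\phi')'=(\phi')^2-c\phi^2$, the boundary term at $r$ is $\sqrt{c}\cot(r\sqrt{c})\,y^2(r)$, and the term at $0$ vanishes because $\phi\phi'u^2=\phi'\,y^2/\phi=O(\rho)$ under the hypothesis $y=O(\rho)$. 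The two analytic caveats you flag are the right ones, and you should indeed make them explicit: the substitution needs $\phi\neq 0$ on $(0,r]$, i.e.\ $r\sqrt{c}<\pi$, and the integration by parts should be performed on $[\epsilon,r]$ with $\epsilon\to 0$ (monotone convergence then also guarantees $\int_0^r\phi^2(u')^2\,d\rho<\infty$). One further remark worth recording: as stated, the lemma's blanket hypothesis ``$c>0$'' is too generous --- for $r\sqrt{c}>\pi$ the quadratic form $\int_0^r([y']^2-cy^2)\,d\rho$ on functions vanishing at both endpoints is no longer positive semidefinite and the inequality can fail --- but in the only place the lemma is invoked one has $c\le c_\alpha<\pi^2/4$, hence $r\sqrt{c}<\pi/2$, so your proof covers every case the paper actually needs.
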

%%%%%%%%%%%%%%%%%%%%%%%%%%%%%%%%%%%%%%%%%%%%%%
%%%%%%%%%%%%%%%%%%% section 4 %%%%%%%%%%%%%%%%%%
%%%%%%%%%%%%%%%%%%%%%%%%%%%%%%%%%%%%%%%%%%%%%%%

\section{Proof of the main results}\label{sec4}

\subsection{Proof of Theorem \ref{main-thm1}}
Given that $f\in\mathcal{B}$ satisfies \eqref{eq:1}
and $c_\alpha$ is the smallest positive root of the equation \eqref{eq:2}.
%\begin{enumerate}
%\item 
A simple computation yields
$$
\alpha=\cfrac{2\sqrt{c_{\alpha}}-\tan \sqrt{c_{\alpha}}}{\tan \sqrt{c_{\alpha}}}.
$$
Differentiating $\alpha$ with respect to $c_{\alpha}$, we obtain
$$
\cfrac{d\alpha}{d c_{\alpha}}=\cfrac{\tan \sqrt{c_{\alpha}}-\sqrt{c_{\alpha}}\sec^2 \sqrt{c_{\alpha}}}
{\sqrt{c_{\alpha}}\tan^2 \sqrt{c_{\alpha}}}.
$$
Since $\tan x-x\sec^2 x\le 0$ is equivalent to $\sin 2x\le 2x$, which is always true for all $x\in \mathbb{R}$,
it follows that $c_{\alpha}$ increases if and only if $\alpha$ decreases.
 
%\item Since $\alpha \geq 0$, it follows from (a) that $c_{\alpha} \leq c_0$. But 
%\cite[(3.2)]{Gab55} says that $c_0 < \pi^2/4$. Thus, we obtain
%$$
%c_{\alpha} \leq c_0 < \pi^2/4. 
%$$
%The univalency of the function $f\in\mathcal{B}$
%now follows from (\ref{eq:1}), as the Nehari criterion \cite[Theorem II]{Neh49} says.

%%%%%%%%%%%%%%%%%%%%%%%%%%%%%%%%%%%%%%%%
Now we proceed for completing the proof of (a) and (b).
\renewcommand{\theenumi}{\alph{enumi}}
\begin{enumerate}
\item In this part we prove that $f$ is meromorphically convex of order $\alpha$, $0\le \alpha<1$, that is
$f$ satisfies \eqref{e:3}. 

Set $S_f(z)=2p(z)$ for a given analytic function $p(z)$. Then by \eqref{eq:1}, it follows that $|p(z)|\leq c_\alpha,$
and hence we have
$$
{\rm Re}(z^2 p(z)) \leq c_\alpha |z|^2 \quad \mbox{for $|z|<1.$}
$$
By Lemma~\ref{l:-1}, the function has the form $f(z)=w_1(z)/w_2(z)$ for any pair of linearly independent solutions 
$w_1(z)$ and $w_2(z)$ of the linear differential equation \eqref{eq:3}.
Clearly, the particular solution $w_2(z)$ satisfies the hypothesis of Lemma \ref{l:3}. 
Since ${\rm Re}(z^2 p(z)) \leq c_\alpha |z|^2$ holds, (\ref{eq:7}) implies
\begin{equation}\label{eq:9}
|w_2(re^{i\theta})|^2{\rm Re}\Big(\frac{re^{i\theta}w_2'(re^{i\theta})}{w_2(re^{i\theta})}\Big)\geq r\int_0 ^r |w_2'(\rho e^{i\theta})|^2 d\rho -r c_\alpha \int_0 ^r{|w_2(\rho e^{i\theta})|^2} d\rho,
\end{equation}
for $0<\rho\le r<1$ and for some fixed $\theta$.

Putting $w_2(\rho e^{i\theta})=u_2(\rho,\theta)+i v_2(\rho,\theta)$. For a constant ray  
$\theta$, $w_2$ will become a function of $\rho$ only. Note that $u_2(\rho)$ and $v_2(\rho)$ satisfies the hypothesis of Lemma \ref{l:4}. We obtain the following two inequalities
after substituting  $u_2(\rho)$ and $v_2(\rho)$ in (\ref{eq:8}) and replacing $c$ by $c_\alpha$
\begin{equation}\label{l:18}
r \int_0 ^r [u_2'(\rho)]^2 d\rho-c_\alpha r  \int_0 ^r [u_2^2(\rho)] d\rho -\sqrt{c_\alpha}r \cot (\sqrt{c_\alpha}r)
\cdot u_2^2(r) \geq 0, 
\end{equation}
and 
\begin{equation}\label{l:19}
r \int_0 ^r [v_2'(\rho)]^2 d\rho-c_\alpha r  \int_0 ^r [v_2^2(\rho)] d\rho -\sqrt{c_\alpha}r 
\cot (\sqrt{c_\alpha}r)\cdot v_2^2(r) \geq 0.
\end{equation}

Since $w_2(\rho e^{i\theta})=u_2(\rho,\theta)+i v_2(\rho,\theta)$, addition of (\ref{l:18}) and (\ref{l:19}) leads to
\begin{equation}\label{eq:10}
r \int_0 ^r |w_2'(\rho e^{i\theta})|^2 d\rho -r c_\alpha\int_0 ^r {|w_2(\rho e^{i\theta})|^2} d\rho \geq \sqrt{c_\alpha} r \cot (\sqrt{c_\alpha}r)
|w_2|^2.
\end{equation}

Comparing (\ref{eq:9}) with (\ref{eq:10}), we obtain
$$
|w_2(re^{i\theta})|^2 {\rm Re}\Big(\cfrac{zw_2'(re^{i\theta})}{w_2(re^{i\theta})}\Big)\geq \sqrt{c_\alpha} r \cot (\sqrt{c_\alpha}r)|w_2(re^{i\theta})|^2,
$$
that is,
\begin{equation}\label{eq:16}
{\rm Re}\Big(\cfrac{zw_2'(z)}{w_2(z)}\Big)\geq \sqrt{c_\alpha} r \cot (\sqrt{c_\alpha}r) \quad \mbox{for $|z|=r<1.$}
\end{equation}

It follows from Lemma \ref{l:2} that
\begin{equation}\label{eq:13}
\sqrt{c_\alpha}  r \cot (\sqrt{c_\alpha}r)={\rm Re}(\sqrt{c_\alpha} r \cot (\sqrt{c_\alpha}r)) > 
\cfrac{\alpha+1}{2}.
\end{equation}
Comparison of (\ref{eq:16}) with (\ref{eq:13}) yields
$$
{\rm Re}\Big(\cfrac{zw_2'(z)}{w_2(z)}\Big) > \cfrac{\alpha+1}{2},
$$
and hence it follows from Lemma \ref{l:1} that $f$ is meromorphically convex of order 
$\alpha$.

%%%%%%%%%%%%%%%%%%%%%% sharpness %%%%%%%%%%%%%%%%%%%%%%%%%%%

\item We prove that the quantity $c_\alpha$ is the largest possible
constant satisfying \eqref{eq:1}, i.e. we can not replace $c_\alpha$
by a larger quantity. We prove this by contradiction. 
%The constant $c_\alpha$ is best possible
%$$
%g(x)=2x-(1+\alpha)\tan(x) < 0 \mbox{ if $\sqrt{c_\alpha}<x<\pi/2$}
%$$ 
%in (\ref{eq:1}) and cannot replace by any larger number that is 
If we replace 
$c_\alpha$ by a larger number $c=c_\alpha+\epsilon$ for some $\epsilon>0,$ 
then we observe that there exists a function $f\in\mathcal{B}$ 
% of the form (\ref{eq:0}) 
satisfying
\begin{equation}
|S_f (z)| \leq 2 (c_\alpha+\epsilon), \quad \mbox{$|z|<1,$}
\end{equation}
but $f$ is not meromorphically convex of order $\alpha$. For this, we
consider the function
$$
f(z)=\cfrac{w_1(z)}{w_2(z)}, \quad |z|<1,
$$
with the two linearly independent solutions
%\begin{eqnarray*}
$$
w_1(z)=\cos (\sqrt{c}z)~~\mbox{ and }~~
w_2(z)=\cfrac{\sin(\sqrt{c}z)}{\sqrt{c}}
$$
of the differential equation $w''+cw=0$. Clearly, by a simple computation,
the function $f(z)=\sqrt{c}\cot(\sqrt{c}z)$ satisfies $S_f (z)=2c$.
% for $c=c_\alpha+\epsilon$ and the differential equation
%$$
%w''+cw=0
%$$
%has the two linearly independent solutions
%\begin{eqnarray*}
%w_1(z)&=&\cos (\sqrt{c}z)=1+\cdots,\\
%w_2(z)&=&\cfrac{\sin(\sqrt{c}z)}{\sqrt{c}}=z+\cdots,
%\end{eqnarray*}
%for $|z|<1$. 
%%then $f$ can be written as ratio of two linearly independent solutions of (\ref{13}). 
%Hence
It remains to show that this function $f$ is not meromorphically convex
of order $\alpha$, equivalently, by definition, we prove that 
$$
-{\rm Re}\Big(1+\cfrac{z_0f''(z_0)}{f'(z_0)}\Big)\le \alpha
$$
for some $z_0\in\mathbb{D}$.
By Lemma~\ref{l:1}, it is equivalently to proving
\begin{equation}\label{eq:14}
{\rm Re}\Big(\cfrac{z_0w'_2(z_0)}{w_2(z_0)}\Big)
= {\rm Re}\Big(\cfrac{\sqrt{c}z_0\cos(\sqrt{c}z_0)}{\sin(\sqrt{c}z_0)}\Big)
\le \cfrac{\alpha+1}{2}.
\end{equation}
for some non-zero $z_0\in\mathbb{D}$, since for $z_0=0$ the relation \eqref{eq:14} contradicts to the assumption
$\alpha<1$. Substituting $0\neq z_0=x_0+iy_0\in\mathbb{D}$ in 
(\ref{eq:14}) and simplifying, we obtain
\begin{align*}
2x_0\sqrt{c}\sin (\sqrt{c}x_0) \cos (\sqrt{c}x_0) 
&+2 y_0\sqrt{c} \sinh (\sqrt{c}y_0) \cosh (\sqrt{c}y_0)\\
&\le (1+\alpha)(\sin^2(\sqrt{c}x_0)+\sinh^2(\sqrt{c}y_0)),
\end{align*}
or
\begin{align*}
\sin(\sqrt{c}x_0) &\cos(\sqrt{c}x_0) [2x_0\sqrt{c}-(1+\alpha)\tan(\sqrt{c}x_0)]\\
& \le \sinh(\sqrt{c}y_0) \cosh(\sqrt{c}y_0)[(1+\alpha)\tanh (\sqrt{c}y_0)-2(\sqrt{c}y_0)],
\end{align*}
for $0<c= c_\alpha+\epsilon$ and $x_0^2+y_0^2<1.$  Choose $y_0=0.$ 
Then to obtain our desired inequality, we have to find $x_0 \in (-1,1)$, $x_0\neq 0$, 
such that
\begin{equation}\label{eq:15}
\sin(\sqrt{c}x_0) \cos(\sqrt{c}x_0) [2x_0\sqrt{c}-(1+\alpha)\tan(\sqrt{c}x_0)]\le 0
\end{equation}
holds.
Now, we see that $\sin(\sqrt{c}x_0)$ and $\cos(\sqrt{c}x_0)$ are positive in $(0,\pi/{2\sqrt{c}})$, and $2x_0\sqrt{c}-(1+\alpha)
\tan(\sqrt{c}x_0)$ is negative in $(\sqrt{c_\alpha}/{\sqrt{c}},\pi/2\sqrt{c})$, where
the latter part follows by (\ref{eq:17}). 
Therefore, \eqref{eq:15} holds true for some $x_0$ in the intersection
$$
(0,\pi/{2\sqrt{c}}) \cap (\sqrt{c_\alpha}/{\sqrt{c}},1)\subset (0,1),
$$
since $c_\alpha<c$.
%hence there must exist some point $x_0$ in the unit disk which satisfies (\ref{eq:15}). 
This completes the proof of our first main theorem. 
\hfill{$\Box$}
%and hence it follows (4) of Theorem \ref{main-thm1}. This ends the proof.
\end{enumerate}

In the following example, we construct a function meromorphically convex of order
$\alpha$ satisfies the hypothesis of Theorem~\ref{main-thm1}.
\begin{example} For a constant $c>0$,
consider the function $f$ defined by 
$$
f(z)=\frac{w_1(z)}{w_2(z)}=\sqrt{c}\cot (\sqrt{c}z),
$$
where $w_1(z)=\cos(\sqrt{c}z)$ and $w_2(z)=(1/\sqrt{c})\sin(\sqrt{c}z)$ that satisfy the 
differential equation
$$
w''+2cw=0.
$$
By Lemma~$\ref{l:-1}$, it follows that $S_f(z)=2c$. Now, for any such constant $c\le c_\alpha$,
where $c_\alpha$ is the smallest positive root of the equation \eqref{eq:2}, one
clearly sees that
$$
|S_f(z)|\le 2c_\alpha.
$$
Next, by comparing with Lemma~$\ref{l:2}$, we see that
$$
{\rm Re}\Big(\frac{zw_2'(z)}{w_2(z)}\Big)={\rm Re}(z\sqrt{c}\cot(\sqrt{c}z))>\frac{1+\alpha}{2}.
$$
This is equivalent to saying that $f$ is meromorphically convex of order $\alpha$, by 
Lemma~$\ref{l:1}$. Thus, Theorem~$\ref{main-thm1}$ is satisfied by the function $f(z)=\sqrt{c}\cot (\sqrt{c}z)$.
\end{example}
%%%%%%%%%%%%%%%%%%%%%%%%%%%%%%%%%%%%%%%%%5

\subsection{Proof of Theorem \ref{main-thm2}}
We adopt the idea from the proof of \cite[Theorem~2]{Chi94}. 
Suppose that $u(z)$ and $v(z)$ are two linearly independent solutions of the differential
equation \eqref{eq:3} with $S_g(z)=2p(z)$, where $u(0)=v'(0)=0$ and
$u'(0)=v(0)=1$. Then by a similar analysis as in the proof of \cite[Theorem~2]{Chi94}, we obtain
$$
g(z)=\cfrac{u(z)}{cu(z)+v(z)},
$$
where $c=-a_2$.
An easy computation yields
\begin{equation}\label{eq:20}
1+\cfrac{zg''(z)}{g'(z)}=1-2z \cfrac{cu'(z)+v'(z)}{cu(z)+v(z)}.
\end{equation}

Now, by the hypothesis, it is easy to see that
$$
\phi(\beta)=\min \left\{\cfrac{\beta-1}{\beta+1}\,,\cfrac{6(\beta-1)}{2(7\beta-9)}\right\}<1 \mbox{ and } \psi(\beta)=\max \left\{\cfrac{\beta+3}{\beta+1}\,,\cfrac{11\beta-15}{7\beta-9}\right\}>1.
$$
Also, we note that 
$$
2\eta+(1+\eta)\delta e^{\delta/2}<2\eta+ \psi(\beta)\delta(1+\eta)e^{\delta/2}<2\phi(\beta)<2
$$
follows from the assumption (\ref{e:4}). Hence $\eta+(1+\eta)\delta e^{\delta/2}/2<1$.
Now \cite[(13)]{Chi94} also satisfied by our hypothesis. Thus, it follows from the similar argument as in
the proof of \cite[Theorem~2]{Chi94} that 
$$
\Big|\cfrac{cu'(z)+v'(z)}{cu(z)+v(z)}\Big| \leq \cfrac{2(\eta+(1+\eta)\delta e^{\delta/2})}{2-2\eta-(1+\eta)\delta e^{\delta/2}},
$$
which yields
\begin{equation}\label{eq:21}
{\rm Re}\Big(\cfrac{z(cu'(z)+v'(z))}{cu(z)+v(z)}\Big)>- \Big|\cfrac{z(cu'+v')}{cu+v}\Big|>  
-\cfrac{2(\eta+(1+\eta)\delta e^{\delta/2})}{2-2\eta-(1+\eta)\delta e^{\delta/2}},
\end{equation}
and 
\begin{equation}\label{eq:24}
{\rm Re}\Big(\cfrac{z(cu'(z)+v'(z))}{cu(z)+v(z)}\Big) \leq \Big|\cfrac{z(cu'+v')}{cu+v}\Big|  <
\cfrac{2(\eta+(1+\eta)\delta e^{\delta/2})}{2-2\eta-(1+\eta)\delta e^{\delta/2}}\,.
\end{equation}
The relations (\ref{eq:20}), (\ref{eq:21}) and (\ref{eq:24}) together lead to

$$
\cfrac{2-6\eta-5(1+\eta)\delta e^{\delta}/2}{2-2\eta-(1+\eta)\delta e^{\delta}/2}<{\rm Re}\Big(1+\cfrac{zg''(z)}{g'(z)}\Big) < \cfrac{2+2\eta+3(1+\eta)\delta e^{\delta}/2}{2-2\eta-(1+\eta)\delta e^{\delta}/2}.
$$
The hypothesis (\ref{e:4}) thus obtains 
$$
\cfrac{2+2\eta+3(1+\eta)\delta e^{\delta}/2}{2-2\eta-(1+\eta)\delta e^{\delta}/2}<\beta
$$
and 
$$
\cfrac{2-6\eta-5(1+\eta)\delta e^{\delta}/2}{2-2\eta-(1+\eta)\delta e^{\delta}/2} > \cfrac{-\beta}{2\beta-3},
$$
completing the proof. \hfill{$\Box$}

\begin{remark}
The constant $\phi(\beta)$ in the statement of Theorem~$\ref{main-thm2}$ is not sharp. For instance,
the function $g(z)=\cfrac{2z-z^2}{2(1-z)^2}\in\mathcal{C}_\infty$ for which $|a_2|=3/2>1$.
\end{remark}

In the following example we construct a function that agree with Theorem~\ref{main-thm2}
for some $\beta\ge 3/2$.
\begin{example}
For any constant $c$ with $|c|<3/7$, consider the function $g$ defined by
$$
g(z)=\frac{z}{1-cz}, \quad |z|<1.
$$
We show that $g\in \mathcal{C}_{5/2}$ and it satisfies the hypothesis of Theorem~\ref{main-thm2}.

First, we note that $g$ is a M\"obius transformation and hence $S_g=0$. Therefore, it
trivially satisfies the hypothesis of Theorem~\ref{main-thm2}.

Secondly, an easy computation yields
$$
1+\frac{zg''(z)}{g'(z)}=\frac{1+cz}{1-cz}.
$$
From this, we have
$$
{\rm Re}\Big(1+\frac{zg''(z)}{g'(z)}\Big)=\frac{1-|c|^2|z|^2}{|1-cz|^2}.
$$
By the usual triangle inequalities, it follows that
$$
\frac{1-|c||z|}{1+|c||z|}\le \frac{1-|c|^2|z|^2}{|1-cz|^2}\le \frac{1+|c||z|}{1-|c||z|}.
$$
Since $|c|<3/7$, for $|z|<1$, it is easy to verify that
$$
\frac{1+|c||z|}{1-|c||z|}<\frac{5}{2}
~~\mbox{ and }~~
-\frac{5}{4}<\frac{1-|c||z|}{1+|c||z|}
$$
hold true. Thus, $g\in \mathcal{C}_{5/2}$.
\end{example}

\noindent
{\bf Acknowledgement.} The second author acknowledges some useful suggestions made by Prof. S. Ponnusamy on this manuscript and for brining the paper \cite{SHAH73} to his attention.
%The work of the second author was partially supported by the National Board for Higher Mathematics, 
%Department of Atomic Energy (Project No. 2/48(12)/ 2016/NBHM(R.P.)/R\&D~II/13613).

\end{document}